\newcommand{\blind}{0}
\newcommand{\pr}{\text{pr}}
\newcommand{\var}{\text{var}}
\newcommand{\Var}{\text{var}}
\newcommand{\cov}{\text{cov}}
\newcommand{\sumN}{\sum_{i=1}^N}
\newcommand{\convD}{ \stackrel{d}{\longrightarrow} }
\def\obs{\textnormal{obs}}
\theoremstyle{definition}
\newtheorem{theorem}{Theorem}
\newtheorem{lemma}{Lemma}
\newtheorem{example}{Example}
\newtheorem{corollary}{Corollary}
\begin{document}

\def\spacingset#1{\renewcommand{\baselinestretch}
{#1}\small\normalsize} \spacingset{1}


\if0\blind
{
  \title{\bf A Paradox From Randomization-Based Causal Inference}
  \author{Peng Ding
  \thanks{
Peng Ding, Department of Statistics, University of California at Berkeley, 425 Evans Hall, Berkeley, California 94720 USA (E-mail: pengdingpku@berkeley.edu).   
I want to thank Professors Donald Rubin, Arthur Dempster, Tyler VanderWeele, James Robins, Alan Agresti, Fan Li, Peter Aronow, Sander Greenland and Judea Pearl for their comments. 
Dr. Avi Feller at Berkeley, Dr. Arman Sabbaghi at Purdue, and Misses Lo-Hua Yuan and Ruobin Gong at Harvard helped edit early versions of this paper. 
I am particularly grateful to Professors Tirthankar Dasgupta and Luke Miratrix for their continuous encouragement and help during my writing of this paper.  A group of Harvard undergraduate students, Taylor Garden, Jessica Izhakoff and Zoe Rosenthal, collected the data from a $2^4$ full factorial design for the final project of Professors Dasgupta and Rubin's course ``Design of Experiments'' in Fall, 2014. They kindly shared their interesting data with me. 
Based on an early version of this paper, I received the 2014 Arthur P. Dempster Award from the Arthur P. Dempster Fund of the Harvard Statistics Department, established by Professor Stephen Blyth. 
I am also grateful to the detailed technical comments from one reviewer and many helpful historical comments from the other reviewer.
}  
 }
\date{}  
  \maketitle
} \fi

\if1\blind
{
   \title{\bf A Paradox from Randomization-Based Causal Inference}
   \date{}  
  \maketitle
} \fi

\bigskip
\begin{abstract}
Under the potential outcomes framework, causal effects are defined as comparisons between potential outcomes under treatment and control. To infer causal effects from randomized experiments, Neyman proposed to test the null hypothesis of zero average causal effect (Neyman's null), and Fisher proposed to test the null hypothesis of zero individual causal effect (Fisher's null). Although the subtle difference between Neyman's null and Fisher's null has caused lots of controversies and confusions for both theoretical and practical statisticians, a careful comparison between the two approaches has been lacking in the literature for more than eighty years. We fill in this historical gap by making a theoretical comparison between them and highlighting an intriguing paradox that has not been recognized by previous researchers. Logically, Fisher's null implies Neyman's null. It is therefore surprising that, in actual completely randomized experiments, rejection of Neyman's null does not imply rejection of Fisher's null for many realistic situations, including the case with constant causal effect. Furthermore, we show that this paradox also exists in other commonly-used experiments, such as stratified experiments, matched-pair experiments, and factorial experiments. Asymptotic analyses, numerical examples, and real data examples all support this surprising phenomenon. Besides its historical and theoretical importance, this paradox also leads to useful practical implications for modern researchers.
\end{abstract}

\noindent%
{\it Keywords:} 
Average null hypothesis,
Fisher randomization rest,
Potential outcome,
Randomized experiment,
Repeated sampling property,
Sharp null hypothesis.
\vfill

\newpage
\spacingset{1.45} 

\section{Introduction}
Ever since Neyman's seminal work, the potential outcomes framework \citep{neyman::1923, rubin::1974} has been widely used for causal inference in randomized experiments \citep[e.g.,][]{neyman::1935, hinkelmann::2007, imbens::2015}. The potential outcomes framework permits making inference about a finite population of interest, with all potential outcomes fixed and randomness coming solely from the physical randomization of the treatment assignments.
Historically, \citet{neyman::1923} was interested in obtaining an unbiased estimator with a repeated sampling evaluation
of the average causal effect, which corresponded to a test for the null hypothesis of zero average causal effect.
On the other hand, \citet{fisher::1935a} focused on testing the sharp null hypothesis of zero individual causal effect, and proposed the Fisher Randomization Test (FRT).
Both Neymanian and Fisherian approaches are randomization-based inference, relying on the physical randomization of the experiments.
Neyman's null and Fisher's null are closely related to each other:
the latter implies the former, and they are equivalent under the constant causal effect assumption.
Both approaches have existed for many decades and are widely used in current statistical practice. They are now introduced at the beginning of many causal inference courses and textbooks \citep[e.g.,][]{rubin::2004, imbens::2015}. Unfortunately, however, a detailed comparison between them has not been made in the literature.

In the past, several researchers \citep[e.g.,][page 40]{rosenbaum::2002} believed that ``in most cases, their disagreement is entirely without technical consequence: the same procedures are used, and the same conclusions are reached.''
However, we show, via both numerical examples and theoretical investigations, that 
the rejection rate of Neyman's null is higher than that of Fisher's null
in many realistic randomized experiments, using their own testing procedures.
In fact, Neyman's method is always more powerful if there is a nonzero constant causal effect, the very alternative most often used for Fisher-style inference.
This finding immediately causes a seeming paradox:
logically, Fisher's null implies Neyman's null, so how can we fail to reject the former while rejecting the latter?

We demonstrate that this surprising paradox is not unique to completely randomized experiments, because it also exists in other commonly-used experiments such as stratified experiments, matched-pair experiments, and factorial experiments.
The result for factorial experiments helps to explain the surprising empirical evidence in \cite{dasgupta::2012} that interval estimators for factorial effects obtained by inverting a sequence of FRTs are often wider than Neymanian confidence intervals.

The paper proceeds as follows.
We review Neymanian and Fisherian randomization-based causal inference in Section \ref{sec::randomization-inference} under the potential outcomes framework. In Section \ref{sec::paradox-neyman-fisher}, we use both numerical examples and asymptotic analyses to demonstrate the paradox from randomization-based inference in completely randomized experiments. 
Section \ref{sec::other-experiments} shows that a similar paradox also exists in other commonly-used experiments. 
Section \ref{sec::extensions} extends the scope of the paper to improved variance estimators and comments on the choices of test statistics. 
Section \ref{sec::examples} illustrates the asymptotic theory of this paper with some finite sample real-life examples.
We conclude with a discussion in Section \ref{sec::discussion}, and relegate all the technical details to the Supplementary Material.

\section{Randomized Experiments and Randomization Inference}
\label{sec::randomization-inference}
We first introduce notation for causal inference in completely randomized experiments,
and then review the Neymanian and Fisherian perspectives for causal inference.

\subsection{Completely Randomized Experiments and Potential Outcomes}

Consider $N$ units in a completely randomized experiment. 
Throughout our discussion, we make the Stable Unit Treatment Value Assumption \citep[SUTVA;][]{cox::1958, rubin::1980}, i.e., there is only one version of the treatment, and interference between subjects is absent. 
SUTVA allows us to define the potential outcome of unit $i$ under treatment $t$ as $Y_i(t)$, with $t=1$ for treatment and $t=0$ for control.
The individual causal effect is defined as a comparison between two potential outcomes, for example, $\tau_i = Y_i(1) -  Y_i(0)$. 
However,
for each subject $i$, we can observe only one of $Y_i(1)$ and $Y_i(0)$ with the other one missing, and the individual causal effect $\tau_i$ is not observable.
The observed outcome is a deterministic function of the treatment assignment $T_i$ and the potential outcomes, namely, $Y_i^{\obs} = T_i Y_i(1) + (1 - T_i) Y_i(0)$. Let $\bm{Y}^{\obs}= (Y_1^{\obs}, \ldots, Y_N^{\obs}) ' $ be the observed outcome vector. 
Let $\bm{T} = (T_1, \ldots, T_N) ' $ denote the treatment assignment vector, and $\bm{t} = (t_1, \ldots, t_N) ' \in \{0, 1\}^N$ be its realization.
Completely randomized experiments satisfy
$
\pr\left( \bm{T} = \bm{t}     \right) = N_1 ! N_0 ! / N !,
$
if $  \sumN t_i = N_1$ and $N_0 = N-N_1$.
Note that in \citet{neyman::1923}'s potential outcomes framework, all the potential outcomes are fixed numbers, and only the treatment assignment vector is random. In general, we can view this framework with fixed potential outcomes as conditional inference given the values of the potential outcomes.
In the early literature, \citet{neyman::1935} and \citet{kempthorne::1955} are two research papers, 
and \citet{kempthorne::1952}, \citet[][Chapter 9]{hodges::1960} and \citet[][Chapter 9]{scheffe::1959} are three textbooks using potential outcomes for analyzing experiments.

\subsection{Neymanian Inference for the Average Causal Effect}
\label{sec::neyman}
\citet{neyman::1923} was interested in estimating the finite population average causal effect:
$$
\tau    = \frac{1}{N} \sumN \tau_i  =  \frac{1}{N} \sumN \{  Y_i(1) - Y_i(0) \}  =  \bar{Y}_1 - \bar{Y}_0,
$$
where $\bar{Y}_t  = \sumN Y_i(t)/N$ is the finite population average of the potential outcomes $\{ Y_i(t): i=1, \ldots, N\}$. He proposed
an unbiased estimator 
\begin{eqnarray}\label{eq::tau-hat}
\widehat{ \tau }  &=& \bar{Y}_1^{\obs} - \bar{Y}_0^{\obs}
\end{eqnarray} 
for $\tau$, 
where $\bar{Y}_t^{\obs} = \sum_{ \{ i: T_i=t \} } Y_i^{\obs} / N_t $ is the sample mean of the observed outcomes under treatment $t$.
The sampling variance of $\widehat{\tau}$ over all possible randomizations is
\begin{eqnarray}
\label{eq::variance-neymanian}
\var(   \widehat{\tau} )  = \frac{  S_1^2 } { N_1} + \frac{ S_0^2} {N_0}   
   - \frac{  S_\tau^2} { N}, 
\end{eqnarray}
depending on
$
   S_t^2  =    \sumN \{ Y_i(t) - \bar{Y}_t\}^2/(N-1),
$ 
 the finite population variance of the potential outcomes $\{ Y_i(t) :i=1, \ldots, N \}$, and 
$
   S_{\tau}^2 =  \sumN \left(   \tau_i -  \tau  \right)^2/(N-1),
$  
the finite population variance of the individual causal effects $\{ \tau_i : i=1, \ldots, N\} $.
Note that previous literature used slightly different notation for $S_\tau^2$, e.g., $S_{1\text{-}0}^2$ \citep{rubin::1990, imbens::2015}.
Because we can never jointly observe the pair of potential outcomes for each unit, the variance of individual causal effects, $S_\tau^2$, is not identifiable from the observed data.
Recognizing this difficulty, \citet{neyman::1923} suggested using
\begin{eqnarray}
\label{eq::variance-neyman-estimator}
\widehat{V}(\text{Neyman}) = 
\frac{ s_1^2} { N_1 } + \frac{ s_0^2}{ N_0},
\end{eqnarray}
as an estimator for $\var( \widehat{ \tau } ) $,
where $s_t^2 = \sum_{\{i: T_i = t  \}}  (Y_i^{\obs} - \bar{Y}_t^{\obs})^2/(N_t - 1) $ is the sample variance of the observed outcomes under treatment $t$.
However, Neyman's variance estimator overestimates the true variance, in the sense that 
$
E\{ \widehat{V}(\text{Neyman}) \}  \geq \var(  \widehat{\tau} ) ,
$
with equality holding if and only if the individual causal effects are constant: $\tau_i = \tau$ or $S_\tau^2=0.$
The randomization distribution of $\widehat{\tau}$ enables us to test the following Neyman's null hypothesis:
$$
H_0(\text{Neyman}) : \tau = 0.
$$
Under $H_0(\text{Neyman})$ and based on the Normal approximation in Section \ref{sec::normal}, the $p$-value from Neyman's approach can be approximated by
\begin{eqnarray}
p(\text{Neyman}) \approx  2  \Phi\left\{  -  \frac{   |  \widehat{\tau}^{\obs} | }{  \sqrt{ \widehat{V}(\text{Neyman}) }   } \right\},
\label{eq::pneyman}
\end{eqnarray}
where $\widehat{\tau}^{\obs}$ is the realized value of $\widehat{\tau}$, and $\Phi(\cdot)$ is the cumulative distribution function of the standard Normal distribution.
With non-constant individual causal effects, 
Neyman's test for the null hypothesis of zero average causal effect tends to be ``conservative,'' in the sense that it rejects less often than the nominal significance level when the null is true.

\subsection{Fisherian Randomization Test for the Sharp Null}
\citet{fisher::1935a} was interested in testing the following sharp null hypothesis:
$$
H_0(\text{Fisher}): Y_i(1) = Y_i(0), \quad  \forall i = 1, \ldots, N.
$$
This null hypothesis is sharp because all missing potential outcomes can be uniquely imputed under $H_0(\text{Fisher})$. The sharp null hypothesis implies that $Y_i(1)  = Y_i(0) = Y_i^{\obs}$ are all fixed constants, so that
the observed outcome for subject $i$ is $Y_i^{\obs}$ under any treatment assignment.
Although we can perform randomization tests using any test statistics capturing the deviation from the null, we will first focus on the randomization test using 
$
 \widehat{\tau} ( \bm{T},  \bm{Y}^{\obs}     ) = \widehat{\tau} 
$ 
as the test statistic, in order to make a direct comparison to Neyman's method. We will comment on other choices of test statistics in the later part of this paper.
Again, the randomness of $\widehat{\tau} ( \bm{T},  \bm{Y}^{\obs}     )$
comes solely from the randomization of the treatment assignment $\bm{T} $, because $\bm{Y}^{\obs} $ is a set of constants under the sharp null.
The $p$-value for the two-sided test under the sharp null is 
$$
p(\text{Fisher}) =   \pr\left\{     | \widehat{\tau} ( \bm{T},  \bm{Y}^{\obs}     )  |  \geq   | \widehat{\tau}^{\obs}   | ~\Big\vert~   H_0(\text{Fisher})  \right\} ,
$$ 
measuring the extremeness of $\widehat{\tau}^{\obs}$ with respect to the null distribution of $\widehat{\tau} ( \bm{T},  \bm{Y}^{\obs}     ) $ over all possible randomizations.
In practice, we can approximate the exact distribution of $\widehat{\tau} ( \bm{T},  \bm{Y}^{\obs}     ) $ by Monte Carlo. 
We draw, repeatedly and independently, completely randomized treatment assignment vectors $\{  \bm{T}^1, \ldots,  \bm{T}^M \} $,
and with large $M$ the $p$-value can be well approximated by 
$$
p(\text{Fisher}  ) \approx  \frac{1}{M} \sum_{m=1}^M I\left\{    | \widehat{\tau} ( \bm{T}^m,  \bm{Y}^{\obs}     )   |  \geq   | \widehat{\tau}^{\obs}    | \right\} .
$$

\citet{eden::1933} performed the FRT empirically, and \citet{welch::1937} and \citet{pitman::1937, pitman::1938} studied its theoretical properties. \citet{rubin::1980} first used the name ``sharp null,'' and \citet{rubin::2004} viewed the FRT as a ``stochastic proof by contradiction.'' For more discussion about randomization tests, please see \cite{rosenbaum::2002} and \citet{edgington::2007}.

\section{A Paradox From Neymanian and Fisherian Inference}
\label{sec::paradox-neyman-fisher}

Neymanian and Fisherian approaches reviewed in Section \ref{sec::randomization-inference} share some common properties but also differ fundamentally. They both rely on the distribution induced by the physical randomization, but they test two different null hypotheses and evolve from different statistical philosophies. 
In this section, we first compare Neymanian and Fisherian approaches using simple numerical examples, and highlight a surprising paradox. We then explain the paradox via asymptotic analysis.

\subsection{Initial Numerical Comparisons}\label{sec::compare}
\label{sec::numerical-examples}

We compare Neymanian and Fisherian approaches using numerical examples with both balanced and unbalanced experiments.
In our simulations, the potential outcomes are fixed, and the simulations are carried out over randomization distributions induced by the treatment assignments. The significance level is $0.05$, and $M$ is $10^5$ for the FRT.

\begin{example}[Balanced Experiments with $N_1=N_0$]
\label{eg::balanced}
The potential outcomes are independently
generated from Normal distributions $Y_i(1)\sim N(1/10, 1/16)$ and $Y_i(0)\sim N(0, 1/16)$, for $i=1, \ldots, 100$.
The individual causal effects are not constant, with $S_\tau^2=0.125.$
Further, once drawn from the Normal distributions above, they are fixed.
We repeatedly generate $1000$ completely randomized treatment assignments with $ N=100$ and $N_1  = N_0 = 50$. For each treatment assignment, we obtain the observed outcomes and implement two tests for Neyman's null and Fisher's null.
As shown in Table \ref{tb::test_b}, it never happens that we reject Fisher's null but fail to reject Neyman's null. However, 
we reject Neyman's null but fail to reject Fisher's null in $15$ instances.
\end{example}

\begin{example}[Unbalanced Experiments with $N_1\neq N_0$]
\label{eq::unbalanced}
The potential outcomes are independently
generated from Normal distributions $Y_i(1)\sim N(1/10, 1/4)$ and $Y_i(0)\sim N(0, 1/16)$, for $i=1,\ldots, 100$.
The individual causal effects are not constant, with $S_\tau^2=0.313.$
They are kept as fixed throughout the simulations. The unequal variances are designed on purpose, and
we will reveal the reason for choosing them later in Example \ref{eg::variance-compare} of Section \ref{sec::compare}.
We repeatedly generate $1000$ completely randomized treatment assignments with $ N=100, N_1  = 70,$ and $N_0 = 30$. 
After obtaining each observed data set, we perform two hypothesis testing procedures, and summarize the results in Table \ref{tb::test_unb}.
The pattern in Table \ref{tb::test_unb} is more striking than in Table \ref{tb::test_b}, because it happens $62$ times in Table \ref{tb::test_unb} that we reject Neyman's null but fail to reject Fisher's null.
For this particular set of potential outcomes, Neyman's testing procedure has a power $62/1000=0.062$, slightly larger than $0.05$, but Fisher's testing procedure has a power $8/1000=0.008$, much smaller than $0.05$ even though the sharp null is not true. We will explain in Section \ref{sec::compare} the reason why the FRT could have a power even smaller than the significance level under some alternative hypotheses.
\end{example}

\begin{table}[ht]
  \small
  \centering
  \caption{Numerical Examples.}
  \subtable[Balanced experiments with $N_1=N_0=50$, corresponding to Example \ref{eg::balanced}]{
\begin{tabular}{rcccc}
\hline 
\multicolumn{1}{r}{}
 &  \multicolumn{1}{c}{  $\text{ not reject }  H_0( \text{Fisher} )$} & \multicolumn{1}{c}{  $ \text{ reject }   H_0(\text{Fisher})$} \\
 \hline 
 $ \text{ not reject }  H_0( \text{Neyman} )$ & $488$  &  $0$ &  \\
 $ \text{ reject }  H_0( \text{Neyman} )$  &  $15$ & $497$ &power(Neyman)=0.512\\
 \hline 
 &&power(Fisher)=0.497
\end{tabular}\label{tb::test_b}
  }
  \\
  \subtable[Unbalanced experiments with $N_1=70$ and $N_0=30$, corresponding to Example \ref{eq::unbalanced}]{
\begin{tabular}{rcccc}
 \hline 
\multicolumn{1}{r}{}
 &  \multicolumn{1}{c}{  $ \text{ not reject } H_0( \text{Fisher} )$} & \multicolumn{1}{c}{  $ \text{ reject }   H_0(\text{Fisher})$}& \\
  \hline 
 $ \text{ not reject } H_0( \text{Neyman} )$ & $930$  &  $0$ &  \\
 $ \text{ reject }  H_0( \text{Neyman} )$  &  $62$ & $8$& power(Neyman)=0.070\\
  \hline 
  &&power(Fisher)=0.008
\end{tabular}\label{tb::test_unb}
  }
\end{table}

\subsection{Statistical Inference, Logic, and Paradox}

Logically, Fisher's null implies Neyman's null. Therefore, Fisher's null should be rejected if Neyman's null is rejected. However, this is not always true from the results of statistical inference in completely randomized experiments. 
We observed in our numerical examples above that it can be the case that
\begin{eqnarray}
p( \text{Neyman} )  < \alpha_0 < p( \text{Fisher} ) ,  \label{eq::controversy}
\end{eqnarray}
in which case we should reject Neyman's null, but not Fisher's null, if we choose the significance level to be $\alpha_0$ (e.g., $\alpha_0 = 0.05$).
When (\ref{eq::controversy}) holds, an awkward logical problem appears.
In the remaining part of this section, we will theoretically explain the empirical findings in Section \ref{sec::numerical-examples} and the consequential logical problem.

\subsection{Asymptotic Evaluations}
\label{sec::normal}

While Neyman's testing procedure has an explicit form, the FRT is typically approximated by Monte Carlo.
In order to compare them, we first discuss the asymptotic Normalities of $\widehat{\tau}$ and the randomization test statistic $\widehat{\tau}(\bm{T}, \bm{Y}^{\obs})$.
We provide a simplified way of doing variance calculation and a short proof for
asymptotic Normalities of both $\widehat{\tau}$ and $\widehat{\tau}(\bm{T}, \bm{Y}^{\obs})$, based on the finite population Central Limit Theorem \citep[CLT;][]{hoeffding::1952, hajek::1960, lehmann::1998, freedman::2008}.
Before the formal asymptotic results, it is worth mentioning the exact meaning of ``asymptotics'' in the context of finite population causal inference. We need to embed the finite population of interest into a hypothetical infinite sequence of finite populations with increasing sizes, and also require the proportions of the treatment units to converge to a fixed value. Essentially, all the population quantities (e.g., $\tau, S_1^2$, etc.) should have the index $N$, and all the sample quantities (e.g., $\widehat{\tau}, s_1^2$, etc.) should have double indices $N$ and $N_1$. However, for the purpose of notational simplicity, we sacrifice a little bit of mathematical precision and drop all the indices in our discussion.

\begin{theorem}
\label{thm::CLTneyman}
As $N\rightarrow \infty,$ the sampling distribution of $\widehat{\tau}$ satisfies
$$
\frac{ \widehat{ \tau } - \tau  }{ \sqrt{  \var( \widehat{ \tau } )  } }  \convD \mathcal{N}(0, 1).
$$
\end{theorem}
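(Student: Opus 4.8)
The plan is to reduce the two-sample statistic $\widehat{\tau} - \tau$ to a single linear statistic in the treatment-assignment indicators and then invoke the finite population CLT cited above. First I would center the potential outcomes, writing $\tilde{Y}_i(t) = Y_i(t) - \bar{Y}_t$ so that $\sumN \tilde{Y}_i(t) = 0$ for each $t$. Using $\sumN T_i = N_1$ and $\sumN (1 - T_i) = N_0$, one checks the identity
$$
\widehat{\tau} - \tau = \frac{1}{N_1} \sumN T_i \tilde{Y}_i(1) - \frac{1}{N_0} \sumN (1 - T_i) \tilde{Y}_i(0) = \sumN T_i c_i,
$$
where $c_i = \tilde{Y}_i(1)/N_1 + \tilde{Y}_i(0)/N_0$ are fixed constants with $\sumN c_i = 0$. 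The crucial observation is that this collapses the coupling between the treatment and control groups---both governed by the same random vector $\bm{T}$---into a single sum over the $N_1$ treated units.

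Second, I would recognize $\sumN T_i c_i$ as (a centered version of) the sample total of a simple random sample of size $N_1$ drawn without replacement from the finite population of constants $\{c_1, \ldots, c_N\}$. Since $\sumN c_i = 0$, its expectation is zero, and its variance can be computed by the standard without-replacement formula; I would verify that this reproduces the variance formula (\ref{eq::variance-neymanian}), so that studentizing by $\sqrt{\var(\widehat{\tau})}$ is exactly the normalization required by the limit theorem.

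Third, with the statistic in this canonical form, I would apply the finite population CLT for simple random sampling \citep{hajek::1960, lehmann::1998} to conclude that $\sumN T_i c_i$, centered and scaled by its own standard deviation, satisfies $(\widehat{\tau} - \tau)/\sqrt{\var(\widehat{\tau})} \convD \mathcal{N}(0,1)$, which is precisely the assertion. The regularity conditions are the standing asymptotic assumptions of this finite-population regime: the proportion $N_1/N$ converges to a limit in $(0,1)$, the finite population variances $S_1^2$, $S_0^2$, and $S_\tau^2$ converge to finite limits, and a Lindeberg-type negligibility condition holds so that no single $c_i$ dominates the sum.

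The main obstacle I anticipate is verifying the Lindeberg (or H\'ajek-type) negligibility condition for the combined constants $c_i$, since $c_i$ mixes the two sets of centered potential outcomes with different weights $1/N_1$ and $1/N_0$; I would control $\max_i c_i^2 / \sumN c_i^2$ using the corresponding moment conditions on the individual potential-outcome sequences together with the bounded treatment proportion. The algebraic reduction in the first step, though routine, is what makes the single-sum CLT directly applicable and is the conceptual heart of this short proof.
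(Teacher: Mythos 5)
Your proposal is correct and follows essentially the same route as the paper's own proof: both reduce $\widehat{\tau}$ to a single linear statistic $\sumN T_i c_i$ in the assignment indicators (your centering of the potential outcomes merely absorbs the additive constant that the paper carries separately), compute its mean and variance from the without-replacement sampling moments, and invoke the H\'ajek finite population CLT under the negligibility condition (\ref{eq::hajekcondition}) with $N_1/N$ bounded away from $0$ and $1$.
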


In practice, the true variance $\var(\widehat{ \tau })$ is replaced by its ``conservative'' estimator $\widehat{V}(\text{Neyman})$,
 and the resulting test rejects less often than the nominal significance level on average.
While the asymptotics for the Neymanian unbiased estimator $\widehat{\tau}$ does not depend on the null hypothesis, the following asymptotic Normality for 
$\widehat{\tau}(\bm{T}, \bm{Y}^{\obs})$ is true only under the sharp null hypothesis.

\begin{theorem}
\label{thm::CLTfisher} Under $H_0( \text{Fisher} ) $ and as $N\rightarrow \infty,$ the null distribution of $\widehat{\tau}(\bm{T},  \bm{Y}^{\obs}) $ satisfies
$$
\frac{ \widehat{\tau}(\bm{T},  \bm{Y}^{\obs})      }{ \sqrt{ \widehat{V}(\text{Fisher})}  } \convD  \mathcal{N}(0, 1),
$$
where $
\bar{Y}^{\obs} =   \sumN Y_i^{\obs} / N
$,
$ 
s^2 =  \sumN (Y_i^{\obs} - \bar{Y}^{\obs})^2 / (N-1),$
and
$
\widehat{V}(\text{Fisher})=  N s^2  / ( N_1 N_0 ) .$
\end{theorem}

Therefore, the $p$-value under $H_0( \text{Fisher} ) $ can be approximated by 
\begin{eqnarray}
p ( \text{Fisher})  \approx 2  \Phi \left\{  - \frac{  |\widehat{\tau}^{\obs}| }{  \sqrt{ \widehat{V}(\text{Fisher}) }   }  \right\} .
\label{eq::pfisher}
\end{eqnarray}
From (\ref{eq::pneyman}) and (\ref{eq::pfisher}), the asymptotic $p$-values obtained from Neymanian and Fisherian approaches differ only due to the difference between the variance estimators $\widehat{V}(\text{Neyman})$ and $\widehat{V}(\text{Fisher})$.
Therefore, a comparison of the variance estimators will explain the different behaviors of the corresponding approaches.
In the following, we use the conventional notation $R_N = o_p(N^{-1})$ for a random quantity satisfying $N \cdot R_N\rightarrow 0$ in probability, as $N\rightarrow \infty$ \citep[cf.][]{lehmann::1998}.

\begin{theorem}
\label{thm::diff}
Asymptotically, the difference between the two variance estimators is
\begin{eqnarray}
\widehat{V}(\text{Fisher}) - \widehat{V}(\text{Neyman})  
= (N_0^{-1} - N_1^{-1}) ( S^2_1  - S^2_0   ) 
+ N^{-1}(\bar{Y}_1 - \bar{Y}_0)^2 + o_p(N^{-1}) .\label{eq::diff}
\end{eqnarray}
\end{theorem}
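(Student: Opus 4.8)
The plan is to reduce the comparison to an exact finite-sample identity and then extract the asymptotics by expanding the combinatorial coefficients and invoking consistency of the sample variances. The crucial observation is that $\widehat{V}(\text{Fisher})$ is built from the \emph{pooled} sample variance $s^2$ of all $N$ observed outcomes, whereas $\widehat{V}(\text{Neyman})$ is built from the two \emph{within-group} sample variances $s_1^2$ and $s_0^2$; the two are linked by an analysis-of-variance decomposition.

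First I would decompose the total sum of squares. Splitting $\sumN (Y_i^{obs} - \bar{Y}^{obs})^2$ over the treated and control groups and adding and subtracting the group means gives
$$
(N-1)s^2 = (N_1-1)s_1^2 + (N_0-1)s_0^2 + N_1(\bar{Y}_1^{obs} - \bar{Y}^{obs})^2 + N_0(\bar{Y}_0^{obs} - \bar{Y}^{obs})^2,
$$
the cross terms vanishing because each deviation sums to zero within its group. Since $\bar{Y}^{obs} = (N_1 \bar{Y}_1^{obs} + N_0 \bar{Y}_0^{obs})/N$, one has $\bar{Y}_1^{obs} - \bar{Y}^{obs} = (N_0/N)\widehat{\tau}$ and $\bar{Y}_0^{obs} - \bar{Y}^{obs} = -(N_1/N)\widehat{\tau}$, so the between-group term collapses to $(N_1 N_0/N)\widehat{\tau}^2$. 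Substituting into $\widehat{V}(\text{Fisher}) = N s^2/(N_1 N_0)$ yields the exact expression
$$
\widehat{V}(\text{Fisher}) = \frac{N}{N_1 N_0 (N-1)}\left\{ (N_1-1)s_1^2 + (N_0-1)s_0^2 \right\} + \frac{\widehat{\tau}^2}{N-1}.
$$

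Next I would subtract $\widehat{V}(\text{Neyman}) = s_1^2/N_1 + s_0^2/N_0$ and collect the coefficients of $s_1^2$, $s_0^2$, and $\widehat{\tau}^2$. A short computation shows the coefficient of $s_1^2$ equals $\frac{N(N_1-1)}{N_1 N_0(N-1)} - N_1^{-1} = (N_0^{-1} - N_1^{-1}) + O(N^{-2})$, and by the symmetric computation the coefficient of $s_0^2$ equals $-(N_0^{-1} - N_1^{-1}) + O(N^{-2})$, leaving the term $\widehat{\tau}^2/(N-1)$. To finish, I would replace the sample quantities by their population limits: the finite-population law of large numbers gives $s_t^2 = S_t^2 + o_p(1)$ and $\widehat{\tau} = \tau + o_p(1)$, so that $\widehat{\tau}^2/(N-1) = N^{-1}(\bar{Y}_1 - \bar{Y}_0)^2 + o_p(N^{-1})$, while $(N_0^{-1} - N_1^{-1})(s_1^2 - s_0^2) = (N_0^{-1}-N_1^{-1})(S_1^2 - S_0^2) + o_p(N^{-1})$ because the $O(N^{-1})$ factor $N_0^{-1} - N_1^{-1}$ multiplies an $o_p(1)$ remainder.

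The bookkeeping of error orders is where the care lies, and it is the main obstacle. One must check that every discarded piece is genuinely $o_p(N^{-1})$: the $O(N^{-2})$ corrections to the coefficients multiply the $O_p(1)$ sample variances to give $O_p(N^{-2})$; the replacement of $s_t^2$ by $S_t^2$ is harmless only because of the $O(N^{-1})$ prefactor $N_0^{-1}-N_1^{-1}$; and the replacement of $\widehat{\tau}^2$ by $\tau^2$ costs $O_p(N^{-1/2})$ before the division by $N-1$. The only non-algebraic ingredient is the consistency $s_t^2 \to S_t^2$ for a simple random sample from the finite population $\{Y_i(t)\}$, which accompanies the finite-population central limit theorem underlying Theorems \ref{thm::CLTneyman} and \ref{thm::CLTfisher} and holds under the usual moment regularity in the embedding sequence.
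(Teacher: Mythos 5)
Your proposal is correct and follows essentially the same route as the paper's proof: the ANOVA decomposition of the pooled sum of squares into within-group and between-group parts, collapse of the between-group term via $\bar{Y}^{obs} = (N_1\bar{Y}_1^{obs}+N_0\bar{Y}_0^{obs})/N$, and replacement of sample by population quantities at $o_p(N^{-1})$ cost. Your explicit tracking of the $O(N^{-2})$ coefficient corrections is slightly more careful than the paper's ``ignoring the difference between $N$ and $N-1$,'' but the argument is the same.
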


The difference between the variance estimators depends on the ratio of the treatment and control sample sizes, and differences between the means and variances of the treatment and control potential outcomes.
The ``conservativeness'' of Neyman's test does not cause the paradox; if we use the true sampling variance rather than the estimated variance of $\widehat{\tau}$ for testing, then the paradox will happen even more often.

In order the verify the asymptotic theory above, we go back to compare the variances in the previous numerical examples.
\begin{example}[Continuations of Examples \ref{eg::balanced} and \ref{eq::unbalanced}]
\label{eg::variance-compare}
We plot in Figure \ref{fg::var} the variances $\widehat{V}(\text{Neyman})$ and $\widehat{V}(\text{Fisher})$ obtained from the numerical examples in Section \ref{sec::numerical-examples}. In both the left and the right panels, $\widehat{V}(\text{Fisher})$ tends to be larger than $\widehat{V}(\text{Neyman})$. This pattern is more striking on the right panel with unbalanced experiments designed to satisfy $ (N_0^{-1} - N_1^{-1}) \left( S^2_1  - S^2_0  \right)  > 0$. 
It is thus not very surprising that the FRT is much less powerful than Neyman's test, and it rejects even less often than nominal $0.05$ level as shown in Table \ref{tb::test_unb}.
\end{example}

\begin{figure}[ht]
\centering
\includegraphics[width =  0.8\textwidth]{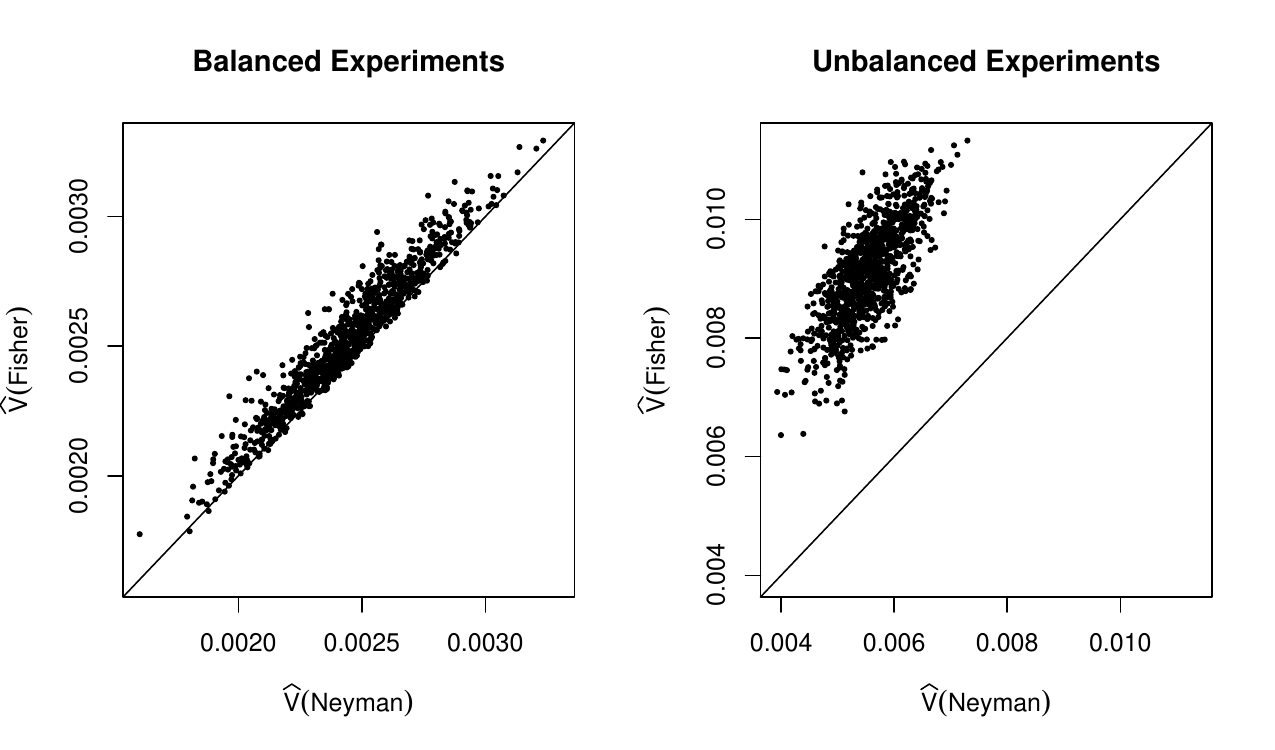}
\caption{Variance estimators in balanced and unbalanced experiments}\label{fg::var}
\end{figure}

\subsection{Theoretical Comparison}
\label{sec::compare}

Although quite straightforward, Theorem \ref{thm::diff} has several helpful implications to explain the paradoxical results in Section \ref{sec::numerical-examples}.

Under $H_0( \text{Fisher} ) $, $\bar{Y}_1 = \bar{Y}_0, S_1^2 = S_0^2$, and the difference between the two variances is of higher order, namely, $  \widehat{V}(\text{Fisher}) -\widehat{V}(\text{Neyman})  = o_p(N^{-1})$. Therefore, Neymanian and Fisherian methods coincide with each other asymptotically under the sharp null.
This is the basic requirement, because both testing procedures should generate correct type one errors under this circumstance.

For the case with constant causal effect, we have $\tau_i = \tau$ and $S_1^2 = S_0^2$. The difference between the two variance estimators reduces to
\begin{eqnarray}\label{eq::compare_balance}
 \widehat{V}(\text{Fisher}) - \widehat{V}(\text{Neyman}) 
= \tau^2/ N  + o_p(N^{-1}). 
\end{eqnarray}
Under $H_0(\text{Neyman})$, $\bar{Y}_1 = \bar{Y}_0$, and the difference between the two variances is of higher order, and two tests have the same asymptotic performance. However, under the alternative hypothesis, $\tau = \bar{Y}_1 - \bar{Y}_0 \neq 0$, and
the difference above is positive and of order $1/N$, and Neyman's test will reject more often than Fisher's test. With larger effect size $|\tau|$, the powers differ more.

For balanced experiments with
$N_1= N_0,$ the difference between the two variance estimators reduces to the same formula as (\ref{eq::compare_balance}), and the conclusions are the same as above.

For unbalanced experiments, the difference between two variances can be either positive or negative. 
In practice, if we have prior knowledge $S_1^2 > S_0^2$, unbalanced experiments with $N_1 > N_0$ are preferable to improve estimation precision. In this case, we have $(N_0^{-1} - N_1^{-1}) \left( S^2_1  - S^2_0  \right)  > 0$ and $\widehat{V}(\text{Fisher})  > \widehat{V}(\text{Neyman})$ for large $N$. Surprisingly, we are more likely to reject Neyman's null than Fisher's null, although Neyman's test itself is conservative with nonconstant causal effect implied by $S_1^2>S_0^2$.

From the above cases, we can see that Neymanian and Fisherian approaches generally have different performances, unless the sharp null hypothesis holds. Fisher's sharp null imposes more restrictions on the potential outcomes, and the variance of the randomization distribution of $\widehat{\tau}$ pools the within and between group variances across treatment and control arms. 
Consequently, the resulting randomization distribution of $\widehat{\tau}$ has larger variance than its repeated sampling variance in many realistic cases. Paradoxically, in many situations, we tend to reject Neyman's null more often than Fisher's null, which contradicts the logical fact that Fisher's null implies Neyman's null.

Finally, we consider the performance of the FRT under Neyman's null with $\bar{Y}_1 = \bar{Y}_0$, which is often of more interest in social sciences. 
If $ S_1^2>S_0^2$ and $N_1>N_0$, the rejection rate of Fisher's test is smaller than Neyman's test, even though $H_0(\text{Neyman})$ holds but $H_0(\text{Fisher})$ does not. Consequently, the difference-in-means statistic $\widehat{\tau}(\bm{T}, \bm{Y}^{\obs})$ has no power against the sharp null, and the resulting FRT rejects even less often than the nominal significance level. However, if $ S_1^2>S_0^2$ and $N_1<N_0$, the FRT may not be more ``conservative'' than Neyman's test. Unfortunately, the FRT may reject more often than the nominal level, yielding an invalid test for Neyman's null. \citet{gail::1996} and \citet{lang::2015} found this phenomenon in numerical examples, and we provide a theoretical explanation.

\subsection{Binary Outcomes}
\label{sec::binary-outcomes}

We close this section by investigating the special case with binary outcomes, for which more explicit results are available.
Let $p_t= \bar{Y}(t)$ be the potential proportion and $\widehat{p}_t = \bar{Y}_t^{\obs}$ be the sample proportion of one under treatment $t$. 
Define $\widehat{p} = \bar{Y}^{\obs}$ as the proportion of one in all the observed outcomes.
The results in the following 
corollary are special cases of Theorems \ref{thm::CLTneyman} to \ref{thm::diff}.

\begin{corollary}
\label{coro::binary}
Neyman's test is asymptotically equivalent to the ``unpooled'' test 
\begin{eqnarray}
\frac{ \widehat{p}_1 - \widehat{p}_0  }{   \sqrt{  \widehat{p}_1(1-\widehat{p}_1)  / N_1  + \widehat{p}_0 (1-\widehat{p}_0) / N_0   }     } 
\convD \mathcal{N}(0, 1)
\label{eq::unpooled}
\end{eqnarray}
under $H_0(\text{Neyman})$;
and Fisher's test is asymptotically equivalent to the ``pooled'' test
\begin{eqnarray}
\frac{   \widehat{p}_1 - \widehat{p}_0   }{  \sqrt{   \widehat{p}  (1-\widehat{p})  (N_1^{-1}  + N_0^{-1})     }  }
\convD \mathcal{N} (0,1)
\label{eq::pooled}
\end{eqnarray}
under $H_0(\text{Fisher})$.
The asymptotic difference between the two tests is due to
\begin{eqnarray}
&&\widehat{V}(\text{Fisher}) - \widehat{V}(\text{Neyman}) \nonumber \\
&=&
(N_0^{-1} - N_1^{-1})  \{  p_1 (1-p_1)   - p_0 (1  - p_0) \}  + N^{-1} (p_1 - p_0)^2 + o_p(N^{-1}).
\label{eq::compare-proportions}
\end{eqnarray}
\end{corollary}

For the case with binary outcomes, we can draw analogous but slightly different conclusions to the above.
Under Neyman's null, $p_1=p_0$ and the two tests are asymptotically equivalent. Therefore, the situation that the FRT is invalid under Neyman's null will never happen for binary outcomes.  
In balanced experiments, Neyman's test is always more powerful than Fisher's test under the alternative with $p_1\neq p_0.$
For unbalanced experiments, the answer is not definite, but Equation (\ref{eq::compare-proportions}) allows us to determine the region of $(p_1, p_0)$ that favors Neyman's test for a given level of the ratio $r=N_1/N.$ When $r>1/2$, Figure \ref{fg::binary-region} shows the regions in which Neyman's test is asymptotically more powerful than Fisher's test according to the value of $r$. When $r<1/2$, the region has the same shape by symmetry. We provide more details about Figure \ref{fg::binary-region} in the Supplementary Material.

\begin{figure}[ht]
\centering
\includegraphics[width = 0.8\textwidth]{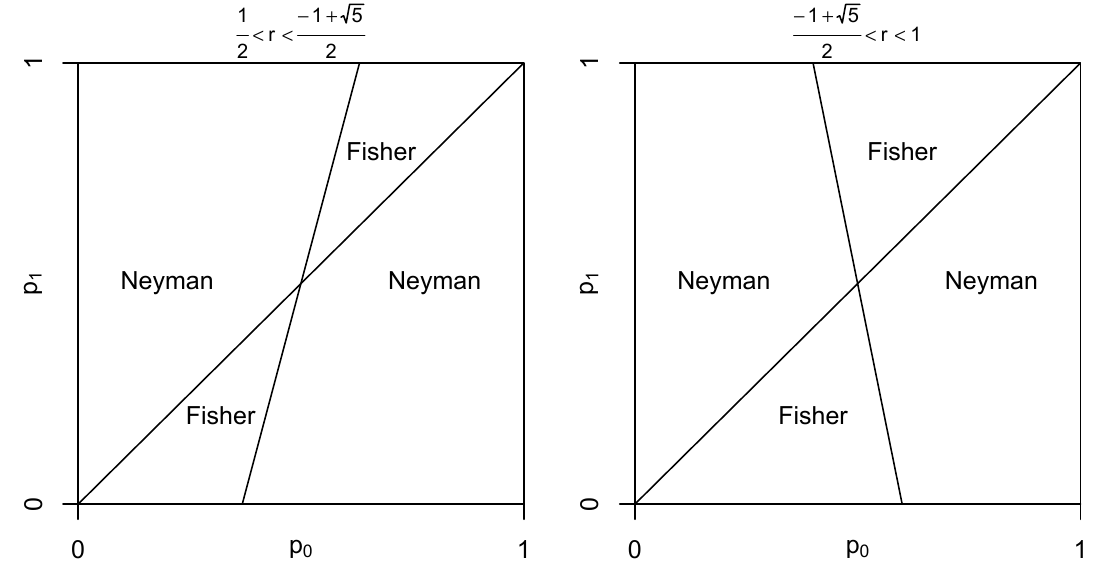}
\caption{Binary Outcome with Different Proportions $r=N_1/N$. Neyman's test is more powerful in the regions marked by ``Neyman.'' }\label{fg::binary-region}
\end{figure}

Note that Fisher's test is equivalent to Fisher's exact test, and (\ref{eq::pooled}) is essentially the Normal approximation of the hypergeometric distribution \citep{barnard::1947, cox::1970, ding::2015}. The two tests in (\ref{eq::unpooled}) and (\ref{eq::pooled}) are based purely on randomization inference, which have the same mathematical forms as the classical ``unpooled'' and ``pooled'' tests for equal proportions under two independent Binomial models. Our conclusion is coherent with \citet{robbins::1977} and \citet{eberhardt::1977} that the ``unpooled'' test is more powerful than the ``pooled'' one with equal sample size. For hypothesis testings in two by two tables, \citet{greenland::1991} observed similar theoretical results as Corollary \ref{coro::binary} but gave a different interpretation. 
Recently, \citet{rigdon::2015} and \citet{li::2016} constructed exact confidence intervals for $\tau$ by inverting a sequence of FRTs.

\section{Ubiquity of the Paradox in Other Experiments}
\label{sec::other-experiments}

The paradox discussed in Section \ref{sec::paradox-neyman-fisher} is not unique to completely randomized experiments.
As a direct generalization of the previous results, the paradox will appear in each stratum of stratified experiments.
We will also show its existence in two other widely-used experiments: matched-pair designs and factorial designs.
In order to minimize the confusion about the notation, each of the following two subsections is self-contained.

\subsection{Matched-Pair Experiments}

Consider a matched-pair experiment with $2N$ units and $N$ pairs matched according to their observed characteristics. Within each matched pair, we randomly select one unit to receive treatment and the other to receive control.
Let $T_i$ be iid Bernoulli$(1/2)$ for $i=1, \ldots, N$, indicating treatment assignments for the matched pairs. 
For pair $i$, the first unit receives treatment and the second unit receives control if $T_i=1$; and otherwise if $T_i=0.$
Under the SUTVA, we define $(Y_{ij}(1), Y_{ij}(0))$ as the potential outcomes of the $j$th unit in the $i$th pair under treatment and control, and the observed outcomes within pair $i$ are $Y_{i1}^{\obs} = T_i Y_{i1}(1) + (1-T_i) Y_{i1}(0)$ and $Y_{i2}^{\obs} = T_i Y_{i2}(0) + (1-T_i) Y_{i2} (1)$.
Let $\bm{T} = (T_1, \ldots, T_N) ' $ and $\bm{Y}^{\obs} = \{ Y_{ij}^{\obs} : i=1, \ldots, N; j=1,2\}$ denote the $N\times 1$ treatment assignment vector and the $N\times 2$ observed outcome matrix, respectively.
Within pair $i$, 
$$
\widehat{\tau}_i = T_i ( Y_{i1}^{\obs} - Y_{i2}^{\obs}  ) + (1-T_i) (Y_{i2}^{\obs}  - Y_{i1}^{\obs})
$$ 
is unbiased for the within-pair average causal effect 
$$
\tau_i = \{   Y_{i1}(1) + Y_{i2}(1) - Y_{i1}(0) - Y_{i2}(0)  \}/2.
$$
Immediately, we can use 
$$
\widehat{\tau} = \frac{1}{N} \sum_{i=1}^N \widehat{\tau}_i
$$
as an unbiased estimator for the finite population average causal effect
$$
\tau = \frac{1}{N} \sum_{i=1}^N \tau_i  = \frac{1}{2N} \sum_{i=1}^N \sum_{j=1}^2 \{  Y_{ij}(1)  - Y_{ij}(0) \}.
$$

\citet{imai::2008} discussed Neymanian inference for $\tau$ and identified the variance of $\widehat{\tau}$ with the corresponding variance estimator.
To be more specific, he calculated
$$
\var(\widehat{\tau}) = \frac{1}{4N^2} \sum_{i=1}^N  \{    Y_{i1}(1) + Y_{i1}(0) - Y_{i2}(1) - Y_{i2}(0)     \}^2,
$$
and proposed a variance estimator
$$
\widehat{V}( \text{Neyman} )  = \frac{1}{N(N-1)} \sum_{i=1}^N ( \widehat{\tau}_i  - \widehat{\tau} )^2.
$$
Again, the variance estimator is ``conservative'' for the true sampling variance because $E\{ \widehat{V}( \text{Neyman} ) \} \geq \var(\widehat{\tau}) $ unless the within-pair average causal effects are constant.
The repeated sampling evaluation above allows us to test Neyman's null hypothesis of zero average causal effect: 
$$
H_0(\text{Neyman}) : \tau = 0.
$$

On the other hand,
\citet{rosenbaum::2002} discussed intensively the FRT in matched-pair experiments under the sharp null hypothesis:
$$
H_0(\text{Fisher}) : Y_{ij}(1) = Y_{ij}(0)  , \forall i=1, \ldots, N; \forall j=1,2,
$$ 
which is, again, much stronger than Neyman's null.
For the purpose of comparison, we choose the test statistic with the same form as $\widehat{\tau}$, denoted as $\widehat{\tau}(\bm{T}, \bm{Y}^{\obs})  $. In fact, \citet{fisher::1935a} used this test to analyze Charles Darwin's data on the relative growth rates of cross- and self-fertilized corn. 
In practice, the null distribution of this test statistic can be calculated exactly by enumerating all the $2^N$ randomizations or approximated by Monte Carlo. For our theoretical investigation, we have the following results.

\begin{theorem}
\label{thm::match-pair}
Under the sharp null hypothesis, $E\{ \widehat{\tau}(\bm{T}, \bm{Y}^{\obs})\mid H_0(\text{Fisher})  \} = 0$, and 
$$
\widehat{V}( \text{Fisher} )  \equiv 
\var \{ \widehat{\tau}(\bm{T}, \bm{Y}^{\obs})\mid H_0(\text{Fisher})  \} =  
\frac{1}{N^2} \sum_{i=1}^N \widehat{\tau}_i^2.
$$ 
Therefore, for matched-pair experiments, the difference in the variances is
$$
\widehat{V}( \text{Fisher} ) -  \widehat{V}( \text{Neyman} )   =    \tau^2  / N   + o_p(N^{-1}) .
$$ 
\end{theorem}

The asymptotic Normality of the two test statistics holds because of the Lindberg--Feller CLT for independent random variables, and therefore the different power behaviors of Neyman and Fisher's tests is again due to the above difference in the variances.
Under $H_0(\text{Neyman})$, the difference is a higher order term, leading to asymptotically equivalent behaviors of Neymanian and Fisherian inferences. However, under the alternative hypothesis with nonzero $\tau$, the same paradox appears again in matched-pair experiments: we tend to reject with Neyman's test more often than with Fisher's test.

For matched-pair experiments with binary outcomes, we let $m_{y_1y_0}^{\obs} $ be the number of pairs with treatment outcome $y_1$ and control outcome $y_0$, where $y_1, y_0\in \{ 0, 1\}.$ Consequently, we can summarize the observed data by a two by two table with cell counts $(m_{11}^{\obs}, m_{10}^{\obs}, m_{01}^{\obs}, m_{00}^{\obs})$. Theorem \ref{thm::match-pair} can then be further simplified as follows.

\begin{corollary}
\label{coro::matched-pair}
In matched-pair experiments with binary outcomes, Neyman's test is asymptotically equivalent to
\begin{eqnarray}
\label{eq::matched-pair-neyman}
\frac{  m_{10}^{\obs} - m_{01}^{\obs} }
{  \sqrt{    m_{10}^{\obs} + m_{01}^{\obs}  -    (m_{10}^{\obs} - m_{01}^{\obs})^2 / N    }}
\convD \mathcal{N}(0, 1)
\end{eqnarray}
under $H_0$(Neyman), and Fisher's test is asymptotically equivalent to
\begin{eqnarray}
\label{eq::matched-pair-fisher}
\frac{  m_{10}^{\obs} - m_{01}^{\obs} } { \sqrt{m_{10}^{\obs} + m_{01}^{\obs}} }
\convD  \mathcal{N}(0, 1) 
\end{eqnarray}
under $H_0$(Fisher). And the asymptotic difference between the two tests is due to
$$
\widehat{V}( \text{Fisher} ) -  \widehat{V}( \text{Neyman} )   =    (  m_{10}^{\obs} - m_{01}^{\obs}  )^2  / N^3   + o_p(N^{-1}) .
$$
\end{corollary}

Note that the number of discordant pairs, $  m_{10}^{\obs} + m_{01}^{\obs}$, is fixed over all randomizations under the sharp null hypothesis, and therefore Fisher's test is equivalent to the exact test based on $m_{10}^{\obs}\sim$ Binomial$( m_{10}^{\obs} + m_{01}^{\obs} , 1/2 )$. Its asymptotic form (\ref{eq::matched-pair-fisher}) is the same as the McNemar test under a super population model \citep{agresti::2004}.

\subsection{Factorial Experiments}
\label{sec::factorial}

\citet{fisher::1935a} and \citet{yates::1937} developed the classical factorial experiments in the context of agricultural experiments, and \cite{wu::2009} provided a comprehensive modern discussion of design and analysis of factorial experiments. 
Although rooted in randomization theory \citep{kempthorne::1955, hinkelmann::2007}, the analysis of factorial experiments is dominated by linear and generalized linear models, with factorial effects often defined as model parameters.
Realizing the inherent drawbacks of the predominant approaches, \cite{dasgupta::2012} discussed causal inference from $2^K$ factorial experiments using the potential outcomes framework, which allows for defining the causal estimands based on potential outcomes instead of model parameters.

We first briefly review the notation for factorial experiments adopted by  
\cite{dasgupta::2012}.
Assume that we have $K$ factors with levels $+1$ and $-1$. Let $\bm{z} = (z_1, \ldots, z_K)' \in \mathcal{F}_K =  \{ +1, -1\}^K$, a $K$-dimensional vector, denote a particular treatment combination.
The number of possible values of $\bm{z}$ is $J = 2^K$, for each of which we define $Y_i(\bm{z})$ as the corresponding potential outcome for unit $i$ under the SUTVA.
We use a $J$-dimensional vector $\bm{Y}_i$ to denote all potential outcomes for unit $i$, where $i=1, \ldots, N =  r\times 2^K$ with an integer $r$ representing the number of replications of each treatment combination.  
Without loss of generality, we will discuss the inference of the main factorial effect of factor $1$, 
and analogous discussion also holds for general factorial effects due to symmetry.
The main factorial effect of factor $1$ can be characterized by a vector $\bm{g}_1$ of dimension $J$, with one half of its elements being $+1$ and the other half being $-1.$
Specifically, the element of $\bm{g}_1$ is $+1$ if the corresponding $z_1$ is $+1$, and $-1$ otherwise.
For example, in $2^2$ experiments, we have $\bm{Y}_i = (Y_i(+1,+1), Y_i(+1,-1), Y_i(-1,+1), Y_i(-1,-1))'$ and $\bm{g}_1 = (+1,+1,-1,-1)'.$
We define $\tau_{i1} = 2^{-(K-1)}  \bm{g}_1' \bm{Y}_i$ as the main factorial effect of factor $1$ for unit $i$, and
\begin{eqnarray*}
\tau_1 = \frac{1}{N} \sum_{i=1}^N \tau_{i 1}  = 2^{-(K-1)} \bm{g}_1' \bar { \bm{Y} }
\end{eqnarray*}
as the average main factorial effect of the factor $1$,
where $\bar{ \bm{Y}} = \sum_{i=1}^N \bm{Y}_i / N.$

For factorial experiments, we define the treatment assignment as $W_i(\bm{z})$, with $W_i(\bm{z})  = 1$ if the $i$th unit is assigned to $\bm{z}$, and $0$ otherwise.
Therefore, we use $\bm{W}_i = \{ W_i(\bm{z}) : \bm{z}  \in \mathcal{F}_K  \}$ as the treatment assignment vector for unit $i$, and let $\bm{W}$ be the collection of all the unit-level treatment assignments.
The observed outcomes are deterministic functions of the potential outcomes and the treatment assignment, namely, $Y_i^{\obs} = \sum_{\bm{z} \in \mathcal{F}_K}  W_i(\bm{z})  Y_i(\bm{z})$ for unit $i$, and $\bm{Y}^{\obs} = (Y_1^{\obs}, \ldots, Y_N^{\obs}) ' $ for all the observed outcomes. 
Because
\begin{eqnarray*}
\bar{Y}^{\obs}(\bm{z}) = \frac{1}{r} \sum_{ \{ i: W_i(\bm{z} )   = 1 \} }  Y_i^{\obs} = \frac{1}{r} \sum_{i=1}^N  W_i(\bm{z}) Y_i(\bm{z})
\end{eqnarray*}
is unbiased for $\bar{Y} (\bm{z})$, 
we can unbiasedly estimate $\tau_1$ by
\begin{eqnarray*}
\widehat{\tau}_1  = 2^{-(K-1)} \bm{g}_1' \bar{ \bm{Y}}^{\obs} ,
\end{eqnarray*}
where $\bar{ \bm{Y}}^{\obs}$ is the $J$-dimensional vector for the average observed outcomes.
\cite{dasgupta::2012} showed that the sampling variance of $\widehat{\tau}_1$ is
\begin{eqnarray}
\label{eq::factorial-var}
\var(\widehat{\tau}_1) = \frac{1}{2^{2(K-1)} r} \sum_{\bm{z} \in \mathcal{F}_K }  S^2(\bm{z})  - \frac{1}{N} S_1^2,
\end{eqnarray}
where $S^2(\bm{z}) = \sum_{i=1}^N   \{   Y_i(\bm{z}) - \bar{Y}(\bm{z})   \}^2/(N-1) $ is the finite population variance of the potential outcomes under treatment combination $\bm{z}$, and $S_1^2 = \sum_{i=1}^N   (\tau_{i1} - \tau_1)^2/(N-1)$ is the finite population variance of the unit level factorial effects $\{ \tau_{i1} :  i=1, \ldots, N\}$. Similar to the discussion in completely randomized experiments, the last term $S_1^2$ in (\ref{eq::factorial-var}) cannot be identified, and consequently the variance in (\ref{eq::factorial-var}) can only be ``conservatively'' estimated by the following Neyman-style variance estimator:
\begin{eqnarray*}
\widehat{V}_1(\text{Neyman}) = \frac{1}{ 2^{2(K-1)}  r}  \sum_{\bm{z} \in \mathcal{F}_K } s^2(\bm{z}),
\end{eqnarray*}
where the sample variance of outcomes $s^2(\bm{z}) = \sum_{ \{ i: W_i (\bm{z}) = 1 \} }  \{    Y_i^{\obs} - \bar{Y}^{\obs}(\bm{z})   \}^2/(r-1)$ under treatment combination $\bm{z}$ is unbiased for $S^2(\bm{z})$.
The discussion above allows us to construct a Wald-type test for Neyman's null of zero average factorial effect for factor $1$:
$$
H_{0}^1 (\text{Neyman}): \tau_1 = 0.
$$

On the other hand, based on the physical act of randomization in factorial experiments, the FRT allows us to test the following sharp null hypothesis:
\begin{eqnarray}
H_{0} (\text{Fisher}) : Y_i(\bm{z}) = Y_i^{\obs}, \forall \bm{z}\in \mathcal{F}_K, \forall  i=1,\ldots, N.
\label{eq::factorial-fisher-null}
\end{eqnarray}
This sharp null restricts all factorial effects for all the individuals to be zero,
which is much stronger than $H_{0}^1 (\text{Neyman}).$
For a fair comparison, we use the same test statistic as $\widehat{\tau}_1  $ in our randomization test, and denote $\widehat{\tau}_1 ( \bm{W}, \bm{Y}^{\obs} ) $ as a function of the treatment assignment and observed outcomes.
Under the sharp null (\ref{eq::factorial-fisher-null}), the randomness of $\widehat{\tau}_1 ( \bm{W}, \bm{Y}^{\obs} ) $ is induced by randomization, and the following theorem gives us its mean and variance.

\begin{theorem}
\label{thm::factorial-fisher-randomization}
Under the sharp null,
$
E\{   \widehat{\tau}_1 ( \bm{W}, \bm{Y}^{\obs} )   \mid    H_{0} (\text{Fisher}) \} = 0,
$
and
$$
\widehat{V}_1 (\text{Fisher})  \equiv    \var\{   \widehat{\tau}_1 ( \bm{W}, \bm{Y}^{\obs} )   \mid    H_{0} (\text{Fisher}) \}= \frac{1}{2^{2(K-1)}  r} Js^2,
$$
where 
$
\bar{Y}^{\obs} =  \sum_{i=1}^N Y_i^{\obs}/N $ and $ s^2 = \sum_{i=1}^N  (  Y_i^{\obs}  - \bar{Y}^{\obs}  )^2/(N-1) 
$
are the sample mean and variance of all the observed outcomes.
\end{theorem}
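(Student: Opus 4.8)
The plan is to collapse the factorial structure down to a single weighted sum over the $N$ units and then treat the problem as a finite-population moment calculation in direct analogy to the completely randomized case. Under $H_{0}(\text{Fisher})$ we have $Y_i(\bm{z}) = Y_i^{obs}$ for every $\bm{z}$, so $\bar{Y}^{obs}(\bm{z}) = r^{-1} \sum_{i=1}^N W_i(\bm{z}) Y_i^{obs}$ depends on the assignment only through which units land in combination $\bm{z}$. Substituting this into $\widehat{\tau}_1(\bm{W}, \bm{Y}^{obs}) = 2^{-(K-1)} \bm{g}_1' \bar{\bm{Y}}^{obs}$ and interchanging the two sums, I would introduce the unit-level sign $G_i = \sum_{\bm{z} \in \mathcal{F}_K} g_1(\bm{z}) W_i(\bm{z})$, where $g_1(\bm{z})$ denotes the entry of $\bm{g}_1$ at combination $\bm{z}$. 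Since each unit is assigned to exactly one $\bm{z}$, the quantity $G_i$ equals $+1$ when unit $i$ receives a combination with $z_1 = +1$ and $-1$ otherwise, and the estimator reduces to the clean expression $\widehat{\tau}_1(\bm{W}, \bm{Y}^{obs}) = \{2^{-(K-1)}/r\} \sum_{i=1}^N G_i Y_i^{obs}$.

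First I would compute the mean. Because exactly half of the $J = 2^K$ combinations have $z_1 = +1$, and each combination receives $r$ units, exactly $N/2$ units satisfy $G_i = +1$ and $N/2$ satisfy $G_i = -1$; hence $E(G_i) = 0$ and $E\{\widehat{\tau}_1(\bm{W}, \bm{Y}^{obs}) \mid H_{0}(\text{Fisher})\} = 0$ by linearity.

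For the variance, the essential observation is that the induced assignment of the factor-$1$ levels across units is itself a completely randomized allocation of $N/2$ ``$+1$'' labels and $N/2$ ``$-1$'' labels to the $N$ units. This reduces the covariance structure of $\{G_i\}$ to standard sampling-without-replacement moments: $G_i^2 = 1$ gives $\var(G_i) = 1$, while by symmetry $\cov(G_i, G_j)$ is constant over $i \neq j$, and the deterministic constraint $\sum_{i=1}^N G_i = 0$ forces $\cov(G_i, G_j) = -1/(N-1)$. Plugging these into $\var(\sum_{i=1}^N G_i Y_i^{obs})$ and using $\sum_{i \neq j} Y_i^{obs} Y_j^{obs} = (N\bar{Y}^{obs})^2 - \sum_{i=1}^N (Y_i^{obs})^2$ collapses everything to $\{N/(N-1)\} \sum_{i=1}^N (Y_i^{obs} - \bar{Y}^{obs})^2 = N s^2$. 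Scaling by the prefactor $2^{-2(K-1)}/r^2$ and substituting $N = rJ$ then delivers $\widehat{V}_1(\text{Fisher}) = Js^2/\{2^{2(K-1)} r\}$.

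The main obstacle will be the covariance computation, and in particular justifying that the factor-$1$ level labels behave as a completely randomized binary assignment even though the underlying randomization partitions the units into $J$ groups of size $r$. I would make this rigorous by summing the assignment indicators over the $2^{K-1}$ combinations with $z_1 = +1$ to obtain a 0/1 indicator with a fixed margin of $N/2$; the symmetry of the $N!/(r!)^J$ equally likely partitions shows these indicators are exchangeable, and a direct count gives that any fixed set of $N/2$ units receives the ``$+1$'' label with probability $1/\binom{N}{N/2}$, exactly as in a completely randomized design. Once this reduction is in place, the identity $\cov(G_i,G_j) = -1/(N-1)$ follows immediately from differentiating the fixed-sum constraint, and everything afterward is the same algebra that produces $\widehat{V}(\text{Fisher})$ in the completely randomized case.
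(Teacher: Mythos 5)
Your proof is correct, and it takes a genuinely different route from the paper's. The paper works at the level of the $J$ group means: it imports the variance and covariance of $\bar{Y}^{obs}(\bm{z})$ under the sharp null from Dasgupta et al.\ (2012) and evaluates the quadratic form $2^{-2(K-1)}\bm{g}_1'\cov(\bar{\bm{Y}}^{obs})\bm{g}_1$, using the contrast property $\sum_j g_{1j}=0$ to collapse the cross terms into $\sum_j g_{1j}^2 = J$. You instead work at the unit level: by summing the assignment indicators over the $2^{K-1}$ combinations with $z_1=+1$, you show that the induced factor-$1$ labeling is exactly a balanced completely randomized assignment of $N/2$ treated and $N/2$ control labels, so that $\widehat{\tau}_1(\bm{W},\bm{Y}^{obs})$ is, up to the scale factor $2^{-(K-1)}/r$ and under the sharp null, a two-group difference statistic whose moments follow from the standard sampling-without-replacement covariances ($\var(G_i)=1$, $\cov(G_i,G_j)=-1/(N-1)$ from exchangeability plus the fixed-sum constraint). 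Your counting argument for the uniformity of the merged super-group (each size-$N/2$ set arises from the same number $\{(N/2)!/(r!)^{J/2}\}^2$ of partitions) is the right way to make the reduction rigorous. The paper's route is shorter given the cited covariance formulas; yours is self-contained, essentially reduces Theorem 6 to Lemma 1, and has the added interpretive payoff of showing directly that $\widehat{V}_1(\text{Fisher})$ coincides with the balanced completely randomized $\widehat{V}(\text{Fisher})$ with $N_1=N_0=N/2$, which is the content of the sanity check following Theorem 7.
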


Based on Normal approximations, comparison of the $p$-values reduces to the difference between $\widehat{V}_1(\text{Neyman}) $ and $\widehat{V}_1 (\text{Fisher}) $, as shown in the theorem below.

\begin{theorem}
\label{thm::factorial-logical-incoherence}
With large $r$,
the difference between $\widehat{V}_1(\text{Neyman}) $ and $\widehat{V}_1 (\text{Fisher}) $ is
\begin{eqnarray}
\widehat{V}_1(\text{Fisher}) - \widehat{V}_1(\text{Neyman})  
= \frac{1}{2^{3K-1} r}  \sum_{\bm{z}\in \mathcal{F}_K}    \sum_{\bm{z} ' \in \mathcal{F}_K} 
\{ \bar{Y}(\bm{z})  -   \bar{Y}(\bm{z} ' )  \}^2 + o_p(r^{-1}) .\label{eq::variance-difference-factorial-experiment}
\end{eqnarray}
\end{theorem}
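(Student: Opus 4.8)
The plan is to reduce the whole statement to an analysis-of-variance decomposition of the total sample variance $s^2$ and then read off the leading term in $r$. First I would substitute the closed form from Theorem \ref{thm::factorial-fisher-randomization} together with the definition of $\widehat{V}_1(\text{Neyman})$, so that the difference collapses to
$$
\widehat{V}_1(\text{Fisher}) - \widehat{V}_1(\text{Neyman}) = \frac{1}{2^{2(K-1)} r}\left\{ J s^2 - \sum_{\bm{z}\in\mathcal{F}_K} s^2(\bm{z})\right\},
$$
with $J = 2^K$. The entire problem is then to evaluate the bracketed quantity to leading order in $r$.

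Next I would decompose the total sum of squares by treatment combination. Since each $\bm{z}$ is replicated exactly $r$ times and there are $N = rJ$ units in all, grouping the observed outcomes by $\bm{z}$ yields the elementary identity $(N-1)s^2 = (r-1)\sum_{\bm{z}} s^2(\bm{z}) + r\sum_{\bm{z}}\{\bar{Y}^{obs}(\bm{z}) - \bar{Y}^{obs}\}^2$, i.e.\ the usual split into within-group and between-group sums of squares. Solving for $s^2$ and substituting, the bracketed quantity becomes $\{-(J-1)\sum_{\bm{z}} s^2(\bm{z}) + Jr\sum_{\bm{z}}\{\bar{Y}^{obs}(\bm{z}) - \bar{Y}^{obs}\}^2\}/(rJ-1)$ after routine algebra.

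Then I would carry out the order bookkeeping. Because each $s^2(\bm{z})$ is a consistent estimator of the bounded population quantity $S^2(\bm{z})$, the sum $\sum_{\bm{z}} s^2(\bm{z})$ is $O_p(1)$, so the first term is $O_p(1/r)$ and hence negligible once the overall prefactor $1/(2^{2(K-1)}r)$ is attached. For the dominant between-group term I would write $Jr/(rJ-1) = 1 + O(1/r)$ and invoke the finite-population law of large numbers: each group mean $\bar{Y}^{obs}(\bm{z})$ is unbiased for $\bar{Y}(\bm{z})$ with sampling variance of order $1/r$, so $\bar{Y}^{obs}(\bm{z}) = \bar{Y}(\bm{z}) + o_p(1)$ and likewise $\bar{Y}^{obs} = \bar{Y} + o_p(1)$ with $\bar{Y} = J^{-1}\sum_{\bm{z}}\bar{Y}(\bm{z})$. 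Replacing sample means by population means at a cost of $o_p(1)$ then gives $J s^2 - \sum_{\bm{z}} s^2(\bm{z}) = \sum_{\bm{z}}\{\bar{Y}(\bm{z}) - \bar{Y}\}^2 + o_p(1)$.

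Finally I would recast the squared deviations from the mean into the pairwise form demanded by the statement, using the identity $\sum_{\bm{z}}\sum_{\bm{z}'}\{\bar{Y}(\bm{z}) - \bar{Y}(\bm{z}')\}^2 = 2J\sum_{\bm{z}}\{\bar{Y}(\bm{z}) - \bar{Y}\}^2$, and verify the constant $2^{2(K-1)}\cdot 2J = 2^{3K-1}$, which turns the $1/r$-scaled leading term into exactly the claimed double sum while pushing the discarded pieces into $o_p(r^{-1})$. The main obstacle is not any single computation but the order accounting in the finite-population regime: since $r\to\infty$ forces $N=rJ\to\infty$, I must ensure that the consistency of the group means $\bar{Y}^{obs}(\bm{z})$ and the boundedness of $\sum_{\bm{z}} s^2(\bm{z})$ hold under the same moment and regularity conditions underlying the earlier completely randomized central limit theorems, so that every term dropped is genuinely $o_p(r^{-1})$ after multiplication by $1/(2^{2(K-1)}r)$.
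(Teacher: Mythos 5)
Your proposal is correct and follows essentially the same route as the paper's proof: the same within/between ANOVA decomposition of $(N-1)s^2$, the same replacement of $\bar{Y}^{obs}(\bm{z})$ by $\bar{Y}(\bm{z})$ at a cost that is $o_p(r^{-1})$ after the $1/(2^{2(K-1)}r)$ prefactor, and the same pairwise identity yielding the constant $2^{3K-1}$. The only cosmetic difference is that you solve explicitly for $Js^2 - \sum_{\bm{z}} s^2(\bm{z})$ before the order bookkeeping, whereas the paper approximates the coefficients of $s^2$ directly.
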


Formula (\ref{eq::compare_balance}) is a special case of formula (\ref{eq::variance-difference-factorial-experiment}) with $K=1$ and $r=N_1=N_0 = N/2$, because complete randomized experiments are special cases of factorial experiments with a single factor.
Therefore, in factorial experiments with the same replicates $r$ at each level, the paradox always exists under alternative hypothesis with nonzero $\tau_1$, just as in balanced completely randomized experiments.

\section{Improvements and Extensions}
\label{sec::extensions}

We have shown that the seemingly paradoxical phenomenon in Section \ref{sec::paradox-neyman-fisher} is due to the fact that Neyman's test is more powerful than Fisher's test in many realistic situations. 
The previous sections restrict the discussion on the difference-in-means statistic. We will further comment on the importance of this choice, and other possible alternative test statistics.
Moreover, the original forms of Neyman's and Fisher's tests are both suboptimal. We will discuss improved Neymanian and Fisherian inference, and the corresponding paradox.

\subsection{Choice of the Test Statistic}
\label{subsec::test-stat}

First, as hinted by \citet{ding::2015}, for randomized experiments with binary outcomes, all test statistics are equivalent to the difference-in-means statistic. We formally state this conclusion in the following theorem.

\begin{theorem}
\label{thm::equivalent-difference-in-means}
For completely randomized experiments, matched-pair experiments, and $2^K$ factorial experiments, if the outcomes are binary, then all test statistics are equivalent to the difference-in-means statistic. 
\end{theorem}

Therefore, for binary data, the choice of test statistic is not a problem.

Second, for continuous outcomes, the difference-in-means statistic is important, because it not only serves as a candidate test statistic for the sharp null hypothesis but also an unbiased estimator for the average causal effect. In the illustrating example in Section \ref{sec::factorial-example}, practitioners are interested in finding the combination of several factors that achieves an optimal mean response.

For continuous outcomes we have more options of test statistics. For instance, the Kolmogorov--Smirnov and Wilcoxon--Mann--Whitney statistics are also useful candidates for the FRT. However, the Neymanian analogues of these two statistics have not been established in the literature, and direct comparisons of the Fisherian and Neymanian using these two statistics are not obvious at this moment. In the Supplementary Material, we illustrate by numerical examples that the conservative nature of the FRT is likely to be true for these two statistics, because we find that the randomization distributions under the sharp null hypothesis is more disperse than those under weaker null hypotheses. Please see the Supplementary Materials for more details, and it is our future research topic to pursue the theoretical results.

\subsection{Improving the Neymanian Variance Estimators}

For completely randomized experiments, \citet{neyman::1923} used $S_\tau^2\geq 0$ as a lower bound, which is not the sharp bound. Recently, for general outcomes \citet{aronow::2014} derived the sharp bound of $S_\tau^2$ based on the marginal distributions of the treatment and control potential outcomes using the Frech\'et--Hoeffding bounds \citep{nelsen::2007}; for binary outcomes \citet{robins::1988} and \citet{ding::2015} gave simple forms. These improvements result in smaller variance estimators.

For matched-pair experiments, \citet{imai::2008} improved the Neymanian variance estimator by using the Cauchy--Schwarz inquality. We are currently working on deriving sharp bounds for the variance of estimated factorial effects.

In summary, Neyman's test is even more powerful with improved variance estimators, which further bolsters the paradoxical situation wherein we reject Neyman's null but fail to reject Fisher's sharp null.

\subsection{Improving the FRT and Connection With the Permutation Test}

In the permutation test literature, some authors \citep[e.g.,][]{neuhaus::1993,janssen::1997,chung::2013,pauly::2015} suggested using the Studentized version of $\widehat{\tau}$, i.e., $\widehat{\tau}/ \widehat{V}_{\text{Neyman}}^{1/2}$, as the test statistic. When the experiment is unbalanced, the FRT using this test statistic has exact type one error under Fisher' null and correct asymptotic type one error under Neyman's null. However, this does not eliminate the paradox discussed in this paper. 
First, we have shown that this paradox arises even in balanced experiments, but this test statistic tries to correct the invalid asymptotic type one error under Neyman's null in unbalanced experiments. Second, Section \ref{subsec::test-stat} has shown that for binary outcomes any test statistic is equivalent to $\widehat{\tau}$, and therefore this Studentized test statistic will not change the paradox at least for binary outcomes. Third, the theories of permutation tests and randomization tests do not have a one-to-one mapping, although they often give the same numerical results. The theory of permutation tests assumes exchangeable units drawn from an infinite super-population, and the theory of randomization tests assumes fixed potential outcomes in a finite population and random treatment assignment. Consequently, the correlation between the potential outcomes never plays a role in the theory of permutation tests, but it plays a central role in the theory of randomization inference as indicated by \citet{neyman::1923}'s seminal work and our discussion above.

\section{Illustrations}
\label{sec::examples}

In this section, we will use real-life examples to illustrate the theory in the previous sections. The first two examples have binary outcomes, and therefore there is no concern about the choice of test statistic. The goal of the third example, a $2^4$ full factorial experiment, is to find the optimal combination of the factors, and therefore the difference-in-means statistic is again a natural choice for a test statistic.

\subsection{A Completely Randomized Experiment}

Consider a hypothetical completely randomized experiment with binary outcome \citep[][pp.191]{rosenbaum::2002}. Among the $32$ treated units, $18$ of them have outcome being $1$, and among the $21$ control units, $5$ of them have outcome being $1$. The Neymanian $p$-value based on the improved variance estimator in \citet{robins::1988} and \citet{ding::2015} is $0.004$. The Fisherian $p$-value based on the FRT or equivalently Fisher's exact test is $0.026$, and the Fisherian $p$-value based on Normal approximation in (\ref{eq::pooled}) is $0.020$. The Neymanian $p$-value is smaller, and if we choose significance level at $0.01$ then the paradox will appear in this example.

\subsection{A Matched-Pair Experiment}

The observed data of the matched-pair experiment in \citet{agresti::2004} can be summarized by the two by two table with cell counts $(m_{11}^{\obs}, m_{10}^{\obs},m_{01}^{\obs},m_{00}^{\obs}) = (53,8,16,9)$. The Neymanian one-sided $p$-value based on (\ref{eq::matched-pair-neyman}) is $0.049$. The Fisherian $p$-value based on the FRT is $0.076$, and the Fisherian $p$-value based on Normal approximation in (\ref{eq::matched-pair-fisher}) is $0.051.$ Again, Neyman's test is more powerful than Fisher's test.

\subsection{A $2^4$ Full Factorial Experiment}
\label{sec::factorial-example}

In the ``Design of Experiments'' course in Fall 2014, a group of Harvard undergraduate students, Taylor Garden, Jessica Izhakoff and Zoe Rosenthal, followed \citet{box::1992}'s famous paper helicopter example for factorial experiments, and tried to identify the optimal combination of the four factors: paper type (construction paper, printer paper), paperclip type (small paperclip, large paperclip), wing length ($2.5$ inches, $2.25$ inches), and fold length ($0.5$ inch, $1.0$ inch), with the first level coded as $-1$ and the second level coded as $+1.$ For more details, please see \citet{box::1992}. For each combination of the factors, they recorded two replicates of the flying times of the helicopters. We display the data in Table \ref{Stat140data}.

We show the Neymanian and Fisherian results in the upper and lower panel of Figure \ref{fg::factorial}, respectively. Figure \ref{fg::neyman} shows both Neymanian point estimates and $p$-values for the $15$ factorial effects. Seven of them, $F_1, F_2, F_4, F_1F_2, F_1F_3, F_1F_4$ and $F_1F_2F_4$, are significant at level $0.05$, and after the Bonferroni correction, three of them, $F_1,F_2,F_1F_2F_4$, are still significant. Figure \ref{fg::fisher} shows the randomization distribution of the factorial effects under the sharp null hypothesis by a grey histogram. Note that all factorial effects have the same randomization distribution, because all of them are essentially a comparison of a random half versus the other half of the observed outcomes. Even though the sample size $32$ is not huge, the randomization distribution is well approximated by the Normal distribution with mean zero and variance $\widehat{V}_1(\text{Fisher})$. Strikingly, only two factorial effects, $F_1$ and $F_2$, are significant, and after the Bonferroni correction only $F_2$ is significant. We further calculate the variance estimates: $\widehat{V}_1(\text{Neyman}) = 0.025$ and $\widehat{V}_1(\text{Fisher})=0.034$. The empirical findings in this particular example with finite sample are coherent with our asymptotic theory developed in Section \ref{sec::factorial}.
In this example, the Neymanian method can help detect more significant factors for achieving optimal flying time, while the more conservative Fisherian method may miss important factors.

\begin{table}[ht]
\centering
\caption{A $2^4$ Factorial Design and Observed Outcomes} 
\begin{tabular}{|rrrr|cc|cc|}
  \hline
$F_1$ & $F_2$ & $F_3$ & $F_4$ & replicate 1 & replicate 2   \\ 
  \hline
$-1$ & $-1$ & $-1$ & $-1$ & 1.60 & 1.55   \\ 
  $-1$ & $-1$ & $-1$ & 1 & 1.70 & 1.63   \\ 
  $-1$ & $-1$ & 1 & $-1$ & 1.44 & 1.38   \\ 
  $-1$ & $-1$ & 1 & 1 & 1.56 & 1.61   \\ 
  $-1$ & 1 & $-1$ & $-1$ & 1.40 & 1.45   \\ 
  $-1$ & 1 & $-1$ & 1 & 1.36 & 1.38   \\ 
  $-1$ & 1 & 1 & $-1$ & 1.43 & 1.40   \\ 
  $-1$ & 1 & 1 & 1 & 1.32 & 1.27   \\ 
  1 & $-1$ & $-1$ & $-1$ & 1.81 & 1.86   \\ 
  1 & $-1$ & $-1$ & 1 & 1.70 & 1.57  \\ 
  1 & $-1$ & 1 & $-1$ & 2.04 & 2.06   \\ 
  1 & $-1$ & 1 & 1 & 1.68 & 1.61   \\ 
  1 & 1 & $-1$ & $-1$ & 1.58 & 1.28  \\ 
  1 & 1 & $-1$ & 1 & 1.43 & 1.49   \\ 
  1 & 1 & 1 & $-1$ & 1.51 & 1.54   \\ 
  1 & 1 & 1 & 1 & 1.53 & 1.38   \\ 
   \hline
\end{tabular}
\label{Stat140data}
\end{table}

\begin{figure}[!ht]
\centering
\subfigure[Neymanian Inference. Factorial effects $F_1, F_2, F_4, F_1F_2, F_1F_3, F_1F_4$ and $F_1F_2F_4$ are significant at level $0.05$.]{
    \includegraphics[width = \textwidth]{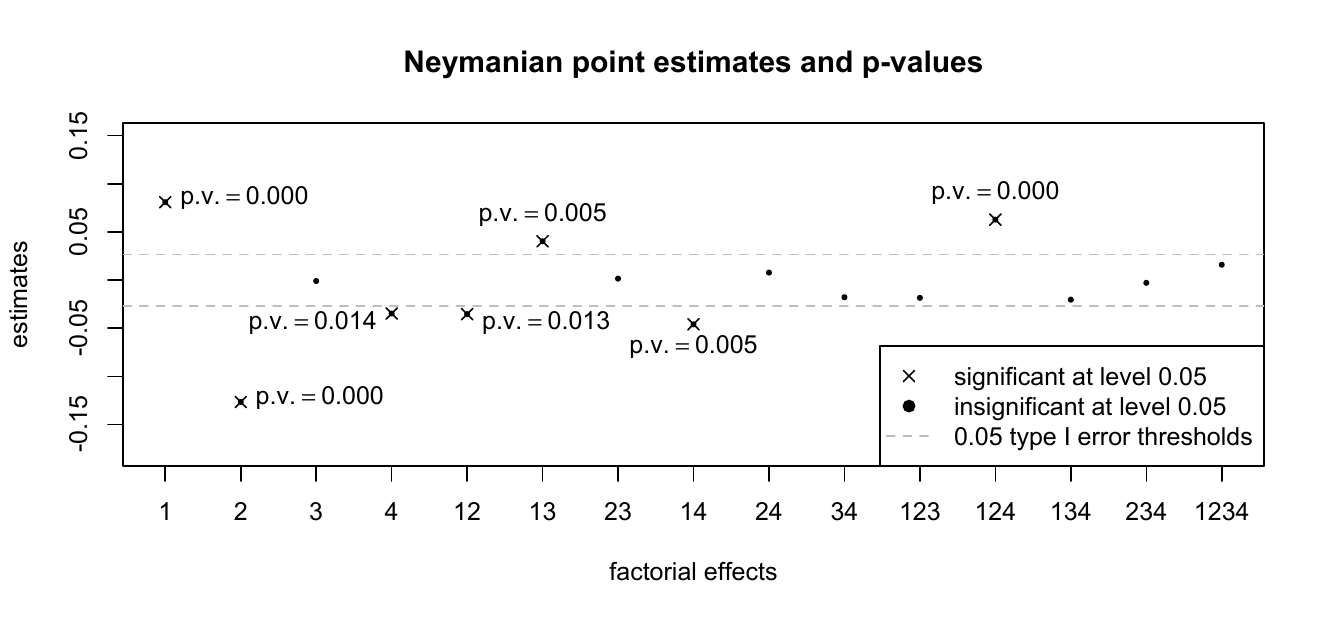}
    \label{fg::neyman}
}
\subfigure[Fisherian Inference. Factorial effects $F_1$ and $F_2$ are significant.]{
    \includegraphics[width = \textwidth]{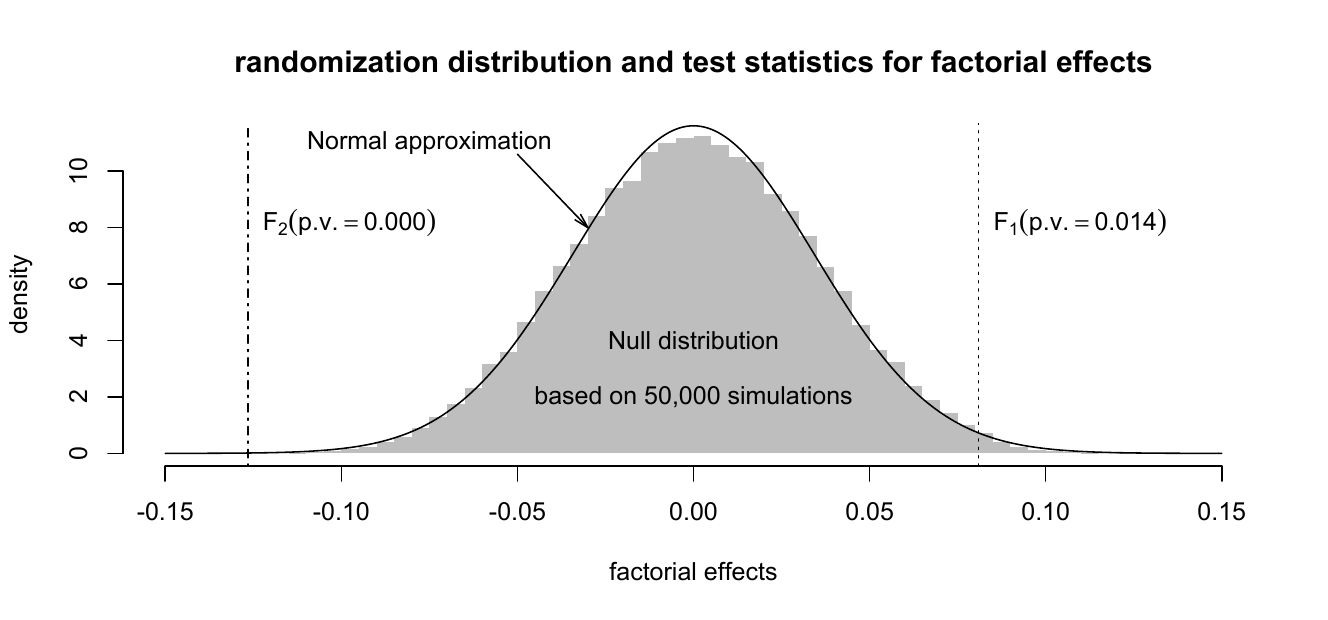}
    \label{fg::fisher}
}
\caption{Randomization-Based Inference for a $2^4$ Full Factorial Experiment}
\label{fg::factorial}
\end{figure}

\section{Discussion}
\label{sec::discussion}

\subsection{Historical Controversy and Modern Discussion}

\citet{neyman::1923} proposed to use potential outcomes for causal inference and derived mathematical properties of randomization; \citet{fisher::1926} advocated using randomization in physical experiments, which was considered by Neyman ``as one of the most valuable of Fisher's achievements'' \citep[][page 44]{reid::1982}. 
\citet[][Section II]{fisher::1935a} pointed out that ``the actual and physical conduct of an experiment must govern the statistical procedure of its interpretation.'' Neyman and Fisher both proposed statistical procedures for analysis of randomized experiments, relying on the randomization distribution itself. However,
whether Neyman's null or Fisher's null makes more sense in practice goes back to the famous Neyman--Fisher controversy in a meeting of the Royal Statistical Society \citep{neyman::1935, fisher::1935b}. 
After their 1935 controversy, \citet{anscombe::1948}, \citet{kempthorne::1952} and \citet{cox::1958} provided some further discussion on the usefulness and limitations of the two null hypotheses. For instance, the authors acknowledged that Neyman's null is mathematically weaker than Fisher's null, but both null hypotheses seem artificial requiring either individual causal effects or the average causal effect be exactly zero for finite experimental units.  For Latin square designs, \citet{wilk::1957} developed theory under Neyman's view, and \citet{cox::1958latin} argued that in most situations the Fisherian analysis was secure.  
Recently, \citet[page 39]{rosenbaum::2002} gave a very insightful philosophical discussion about the controversy, and \cite{sabbaghi::2013} revisited this controversy and its consequences. \citet{fienberg::1996} and \citet{cox::2012} provided more historical aspects of causal inference and in particular the Neyman--Fisher controversy.

While the answer may depend on different perspectives of practical problems, we discussed only the consequent seeming paradox of Neymanian and Fisherian testing procedures for their own null hypotheses. 
Both our numerical examples and asymptotic theory showed that we encounter a serious logical problem in the analysis of randomized experiments, even though both Neyman's and Fisher's tests are valid Frequentists' tests, in the sense of controlling correct type one errors under their own null hypotheses.
Our numerical examples and theoretical analysis reach a conclusion different from
 \citet{rosenbaum::2002}.

\subsection{Randomization-Based and Regression-Based Inference}

In the current statistical practice, it is also very popular among applied researchers to use regression-based methods to analyze experimental data \citep{angrist::2008}. Assume the 
a linear model for the observed outcomes:
$
Y_i^{\obs} = \alpha + \beta T_i + \varepsilon_i, 
$
where $\varepsilon_i, \ldots, \varepsilon_N$ are independently and identically distributed (iid) as $\mathcal{N}(0,\sigma^2)$.
The hypothesis of zero treatment effect is thus characterized by
$
H_0(LM) :  \beta = 0 . 
$
The usual ordinary least squares variance estimator for the regression coefficient may not correctly reflect the true variance of $\widehat{\tau}$ under randomization. \citet{schochet::2010},
\citet{samii::2012}, \citet{lin::2013} and \citet{imbens::2015} pointed out that we can solve this problem by using the Huber--White heteroskedasticity-robust variance estimator \citep{huber::1967, white::1980}, and the corresponding Wald test is asymptotically the same as Neyman's test. In Theorem A.1 of the Supplementary Material, we further build an equivalence relationship between Rao's score test and the FRT. For more technical details, please see the Supplementary Material. Previous results, as well as Theorem A.1, do justify the usage of linear models in analysis of experimental data.

\subsection{Interval Estimation}
Originally, \citet{neyman::1923} proposed an unbiased estimator for the average causal effect $\tau$ with a repeated sampling evaluation, which was later developed into the concept of the confidence interval \citep{neyman::1937}. In order to compare Neyman's approach with the FRT, we converted the interval estimator into a hypothesis testing procedure. As a dual, we can also invert the 
FRT for a sequence of null hypotheses to get an interval estimator for $\tau$ \citep{pitman::1937, pitman::1938, rosenbaum::2002}.
For example, we consider the sequence of sharp null hypotheses with constant causal effects:
\begin{eqnarray} 
H_0^ \delta(\text{Fisher}): Y_i(1) -  Y_i(0) =  \delta , \quad  \forall i = 1, \ldots, N.
\label{eq::constantH0}
\end{eqnarray}
The interval estimator for $\tau$ with coverage rate $1-\alpha$ is
$$
\left\{   \delta :  \text{Fail to reject }H_0^ \delta(\text{Fisher}) \text{ by the FRT at significant level }\alpha  \right\}.
$$ 
\citet{dasgupta::2012} 
found some empirical evidence in factorial designs that the above interval is wider than the Neymanian confidence interval.
Due to the duality between hypothesis testing and interval estimation, our results about hypothesis testing can partially explain the phenomenon about interval estimation in \citet{dasgupta::2012}. To avoid making assumptions such as constant causal effects in \eqref{eq::constantH0}, we restricted the theoretic discussion to only hypothesis testings. It is our future work to extend the theory to interval estimations.

\subsection{Practical Implications}

We highlight some practical implications of our theory developed in the above sections.

First, the FRT is usually less powerful than Neyman's test, even for the simplest case with constant causal effect. Practitioners should keep in mind that the FRT may miss important treatment factors. Our examples in Section \ref{sec::examples} and the empirical evidence in \citet{dasgupta::2012} have confirmed our theoretical results.

Second, in the presence of treatment effect heterogeneity, the FRT may not be a valid test for the null hypothesis of zero average causal effect. Therefore, practitioners, especially those who are interested in social sciences, should always be aware of this potential danger of using the FRT, if the observed data show substantive heterogeneity in treatment and control groups. Furthermore, as \citet{cox::1958latin} pointed out, in the presence of treatment effect heterogeneity, focusing only on the average causal effect is often not adequate, and detecting and explaining such heterogeneity may be more helpful.
Treatment effect variation is another important issue beyond the current scope of our paper. \citet{ding::2015jrssb} investigate this problem under the randomization framework.

Third, although we have shown that the FRT is less powerful in many realistic cases, we do not conclude that Neymanian inference trumps Fisherian inference. All our comparisons are based on asymptotics under regularity conditions, and the conclusion may not be true with small sample sizes or ``irregular'' potential outcomes. Therefore, Fisherian inference is still useful for small sample problems and exact inference. In practice, we should always check the discrepancy between the Normal approximation and the exact randomization distribution as in Figure \ref{fg::fisher} before applying our theoretical results to applied problems.

\section*{Supplementary Material}

Appendix A.1 gives two useful lemmas for randomized experiments. Appendix A.2 gives the proofs of all the theorems and corollaries in the main text. Appendix A.3 comments on the regression-based causal inference, and establishes a new connection between Rao's score test and the FRT. Appendix A.4 shows more details about generating Figure \ref{fg::binary-region} in the main text. Appendix A.5 discusses the behaviors of the FRT using the Kolmogorov--Smirnov and Wilcoxon--Mann--Whitney statistics.

\bibliographystyle{Chicago}

\pagebreak 
\bigskip
\begin{center}
{\large\bf Supplementary Material}
\end{center}

\renewcommand {\thefigure} {A\arabic{figure}}
\renewcommand {\thetable} {A\arabic{table}}
\renewcommand {\theequation} {A\arabic{equation}}
\renewcommand {\thelemma} {A\arabic{lemma}}
\renewcommand {\thesection} {A\arabic{section}}
\renewcommand {\thetheorem} {A\arabic{theorem}}

\setcounter{section}{0}

\section{Lemmas}

\begin{lemma}
\label{lemma::randomization}
The completely randomized treatment assignment $\bm{T} = (T_1, \ldots, T_N) ' $ satisfies 
$$
E ( T_i )  = \frac{N_1}{ N}, \quad 
\var(T_i) = \frac{  N_1 N_0 }{N^2}, \quad 
\cov(T_i, T_j) = - \frac{N_1 N_0 } { N^2 (N-1)  }.
$$ 
If $c_1, \ldots, c_N$ are constants and $\bar{c} = \sumN c_i/N$, we have
\begin{eqnarray*}
E\left(   \sumN T_i c_i  \right)  = N_1 \bar{c}, \quad 
\var\left(  \sumN T_i c_i \right) = \frac{N_1 N_0}{N (N-1)} \sumN (c_i - \bar{c})^2.
\end{eqnarray*}
\end{lemma}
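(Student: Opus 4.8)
The plan is to view $\bm{T}$ as the vector of indicators recording which $N_1$ of the $N$ units are selected for treatment under simple random sampling without replacement, so that the entire lemma reduces to a first- and second-moment computation for such sampling. First I would record the single-coordinate moments. By exchangeability of the units, each $T_i$ is marginally Bernoulli with $\pr(T_i = 1) = N_1/N$, giving $E(T_i) = N_1/N$; and since $T_i$ is an indicator we have $T_i^2 = T_i$, so $\var(T_i) = E(T_i) - \{E(T_i)\}^2 = N_1/N - N_1^2/N^2 = N_1 N_0/N^2$.

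For the covariance I would compute the joint probability $E(T_i T_j) = \pr(T_i = 1, T_j = 1)$ for $i \neq j$ by direct counting: the chance that units $i$ and $j$ both land in the treated group is $N_1(N_1-1)/\{N(N-1)\}$. Subtracting $E(T_i)E(T_j) = N_1^2/N^2$ and placing everything over the common denominator $N^2(N-1)$ collapses the numerator to $-N_1 N_0$, which yields $\cov(T_i, T_j) = -N_1 N_0/\{N^2(N-1)\}$.

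The two displayed identities then follow by linearity and bilinearity. The mean is immediate: $E(\sumN T_i c_i) = \sum_i c_i E(T_i) = (N_1/N)\sum_i c_i = N_1 \bar{c}$. For the variance I would expand $\var(\sumN T_i c_i) = \sum_i c_i^2 \var(T_i) + \sum_{i \neq j} c_i c_j \cov(T_i, T_j)$, substitute the two moments just derived, and invoke the identity $\sum_{i \neq j} c_i c_j = (\sum_i c_i)^2 - \sum_i c_i^2 = N^2 \bar{c}^2 - \sum_i c_i^2$. Combining the resulting terms over $N^2(N-1)$, the diagonal variance contribution and the off-diagonal covariance contribution recombine into $N(\sum_i c_i^2 - N\bar{c}^2)$, and recognizing $\sum_i c_i^2 - N\bar{c}^2 = \sum_i (c_i - \bar{c})^2$ delivers the stated $\frac{N_1 N_0}{N(N-1)} \sum_i (c_i - \bar{c})^2$.

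Every step is routine, so there is no genuine difficulty; the only place demanding care is the final algebraic recombination, where the negative, exchangeable covariance must be seen to cancel against the diagonal variance term in exactly the proportion that reproduces the centered sum of squares. As sanity checks I would verify the degenerate cases $c_i \equiv 1$, for which $\sumN T_i = N_1$ is constant and the formula correctly returns $0$ via $\sum_i(1-\bar{c})^2 = 0$, and $N_1 = N$, for which $N_0 = 0$ kills the variance as it must.
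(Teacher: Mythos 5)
Your proof is correct: the marginal Bernoulli moments, the counting argument for $E(T_iT_j)=N_1(N_1-1)/\{N(N-1)\}$, and the final recombination of the diagonal and off-diagonal terms into the centered sum of squares are all exactly right. The paper itself gives no proof of this lemma — it is quoted directly from Cochran (1977) — and your derivation is the standard one from finite-population sampling theory, so there is nothing to compare against.
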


\begin{proof}
[Proof of Lemma \ref{lemma::randomization}.] 
The treatment vector $\bm{T}$ can be viewed as the inclusion indicator vector of a simple random sample of size $N_1$ from a finite population of size $N$. The conclusion follows from \citet{cochran::1977}.
\end{proof}

\begin{lemma}
[Finite Population Central Limit Theorem; \citealp{hajek::1960, lehmann::1998}]
\label{lemma::CLT}
Suppose we have a finite population $ \{ x_{1}, \ldots, x_{N} \}$ with size $N$ and mean $\bar{x} =  \sumN x_i / N$, and a simple random sample of size $n$ with inclusion indicators $\{I_i:i=1, \ldots, N\}$. Let $\bar{X}_n = \sumN I_i x_{i}/n$ be the sample mean. As $N\rightarrow \infty$, if 
\begin{eqnarray}
\label{eq::hajekcondition}
{  \max_{1\leq i\leq N}(x_i - \bar{x})^2 \over   \sumN (x_i - \bar{x})^2/N } \text{ is bounded and } { n \over N} \rightarrow c\in (0, 1),
\end{eqnarray}
we have that 
$$
\frac{  \bar{X}_n  - \bar{x}    }{  \sqrt{  \var ( \bar{X}_n ) } } \convD N(0,1).
$$
\end{lemma}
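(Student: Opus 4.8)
The plan is to prove the finite population central limit theorem by coupling the simple random sample to an independent Bernoulli sampling scheme and then transferring asymptotic normality through a conditioning argument, following the strategy of \citet{hajek::1960}. First I would rewrite the centered sample mean in terms of the inclusion indicators. Since $\sumN I_i = n$ is deterministic, we have $\bar{X}_n - \bar{x} = n^{-1} \sumN I_i (x_i - \bar{x})$, and Lemma \ref{lemma::randomization} (with $n$ in the role of $N_1$) supplies the exact variance $\var(\bar{X}_n) = \{(N-n)/(nN(N-1))\} \sumN (x_i - \bar{x})^2$. The essential difficulty is that the indicators $I_1, \ldots, I_N$ are negatively correlated through the constraint $\sumN I_i = n$, so a direct application of a CLT for independent summands is unavailable.

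To circumvent the dependence, I would introduce independent Bernoulli variables $I_1^*, \ldots, I_N^*$ with success probability $n/N$, and exploit the exact identity that the conditional law of $(I_1^*, \ldots, I_N^*)$ given $\{\sumN I_i^* = n\}$ coincides with the simple-random-sample law of $(I_1, \ldots, I_N)$. The next step is a bivariate Lindeberg--Feller CLT for the standardized pair $(T_N, U_N)$, where $T_N = \sumN I_i^* (x_i - \bar{x})$ and $U_N = \sumN (I_i^* - n/N)$. The boundedness hypothesis in \eqref{eq::hajekcondition}, together with $n/N \to c \in (0,1)$, is exactly what is needed to verify the Lindeberg condition for the independent array $\{I_i^*(x_i-\bar{x})\}$, delivering joint asymptotic normality of $(T_N, U_N)$ with a limiting covariance computable from the Bernoulli moments.

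The heart of the argument is the conditioning step: one must show that fixing $U_N = 0$ (i.e.\ $\sumN I_i^* = n$) preserves asymptotic normality of $T_N$ and produces precisely the finite-population variance. For this I would invoke a local central limit theorem for the integer-valued lattice variable $\sumN I_i^*$ to control the point probability $\pr(\sumN I_i^* = n)$, and then show that the conditional characteristic function of $T_N$ given $U_N = 0$ converges to that of a Gaussian whose variance is the residual variance after regressing the $T_N$-limit on the $U_N$-limit. Matching this residual variance against the exact $\var(\bar{X}_n)$ from Lemma \ref{lemma::randomization} then yields $(\bar{X}_n - \bar{x})/\sqrt{\var(\bar{X}_n)} \convD \mathcal{N}(0,1)$, accounting automatically for the finite-population correction factor.

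I expect the conditioning/local-limit step to be the main obstacle, since making the transfer from the unconditional to the conditional distribution rigorous requires uniform control of the lattice point mass and careful handling of the characteristic functions. A cleaner alternative that sidesteps the conditioning entirely is to recognize $\sumN I_i x_i$ as a linear permutation statistic $\sum_{i=1}^N a_i x_{\pi(i)}$ with weights $a_i = I\{i \le n\}$ and a uniformly random permutation $\pi$, and then apply the combinatorial CLT of Hoeffding (cf.\ the Wald--Wolfowitz theorem); its Lindeberg-type sufficient condition is again implied by the boundedness assumption in \eqref{eq::hajekcondition}, so either route leads to the stated conclusion.
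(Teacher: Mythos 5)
The paper does not actually prove Lemma~\ref{lemma::CLT}: it is stated as a classical result and imported by citation from H\'ajek (1960) and Lehmann (1998), so there is no in-paper argument to compare yours against. Judged on its own terms, your sketch is a faithful reconstruction of H\'ajek's original route and is sound in outline. The exact variance you quote, $\var(\bar{X}_n) = \{(N-n)/(nN(N-1))\}\sumN (x_i-\bar{x})^2$, follows from Lemma~\ref{lemma::randomization} as you say; the identity that i.i.d.\ Bernoulli$(n/N)$ indicators conditioned on summing to $n$ reproduce the simple-random-sampling law is correct; and you rightly identify the conditioning/local-limit step as the technical crux. One small simplification you could exploit: because the summands of $T_N$ are centered at $\bar{x}$, one has $\cov(T_N, U_N) = (n/N)(1-n/N)\sumN(x_i-\bar{x}) = 0$ exactly, so the ``residual variance after regression'' is just the unconditional variance of $T_N$ and the bookkeeping collapses. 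Your alternative via the combinatorial CLT (Wald--Wolfowitz/Hoeffding, in the form used by H\'ajek for rank statistics) is the cleaner path and is also valid: condition (\ref{eq::hajekcondition}) together with $n/N \to c \in (0,1)$ forces $\min(n, N-n) \to \infty$, hence $\max_i (x_i-\bar{x})^2 / \{\min(n,N-n) \cdot \sumN (x_i-\bar{x})^2/N\} \to 0$, which is the Lindeberg-type sufficient condition for that theorem. Either route, carried out with the characteristic-function details you defer, establishes the lemma; neither conflicts with anything in the paper, which simply takes the result as given.
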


\begin{lemma}
\label{lemma::factorial}
If $\{  W_i(\bm{z}) : i=1,\ldots, N; \bm{z}\in \mathcal{F}_K \}$ is the collection of treatment indicators from a $2^K$ factorial experiment, then we have the following correlation structure: for $i\neq i'$ and $\bm{z}\neq \bm{z}'$,
\begin{eqnarray*}
\cov\{ W_i(\bm{z}), W_{i}(\bm{z})  \} = \frac{r(N-r)}{N^2},\quad 
\cov\{ W_i(\bm{z}), W_{i'}(\bm{z})  \} = - \frac{ r(N-r) }{  N^2(N-1) } ,\\
\cov\{ W_i(\bm{z}), W_{i}(\bm{z}')  \} = - \frac{r^2}{N^2},\quad 
\cov\{ W_i(\bm{z}), W_{i'}(\bm{z}')  \} = \frac{r^2}{ N^2(N-1)}. 
\end{eqnarray*}
\end{lemma}

\begin{proof}
[Proof of Lemma \ref{lemma::factorial}.]
See Lemmas 4 and 5 of
\citet{dasgupta::2012}. 
\end{proof}

\section{Proofs of the Theorems}

\begin{proof}[Proof of Theorem 1.]
First, $\widehat{\tau}$ has the following representation
\begin{eqnarray}
\widehat{\tau} &=& \frac{1}{N_1} \sumN T_i Y_i^{\obs} - \frac{1}{N_0} \sumN (1 - T_i)Y_i^{\obs}   \nonumber\\
                       &=& \frac{1}{N_1}  \sumN T_i Y_i(1) - \frac{1}{N_0} \sumN (1 - T_i)Y_i(0)  \nonumber \\
                       &=& \sumN T_i \left\{   \frac{Y_i(1)}{N_1} + \frac{Y_i(0)}{N_0}  \right\} - \frac{1}{N_0}\sumN Y_i(0). \label{eq::representNeyman}
\end{eqnarray}
Since all the potential outcomes are fixed, we use Lemma \ref{lemma::randomization} to obtain that the mean is
$$
E( \widehat{\tau}  ) = \frac{N_1}{N}  \sumN  \left\{   \frac{Y_i(1)}{N_1} + \frac{Y_i(0)}{N_0}  \right\} - \frac{1}{N_0}\sumN Y_i(0)
= \frac{1}{N} \sumN Y_i(1) - \frac{1}{N} \sumN Y_i(0)  = \tau, 
$$
and the variance is
\begin{eqnarray*}
\var(  \widehat{\tau} )  &=& \frac{N_1 N_0}{N (N-1)} \sumN \left\{    \frac{Y_i(1)}{N_1} + \frac{Y_i(0)}{N_0} -   \frac{\bar{Y}_1}{N_1} - \frac{\bar{Y}_0}{N_0}   \right\}^2\\
                                    &=& \frac{N_1 N_0}{N (N-1)} 
                                           \left[    \frac{1}{N_1^2}  \sumN \{ Y_i(1) - \bar{Y}_1\}^2 
                                                     + \frac{1}{N_0^2}  \sumN \{ Y_i(0) - \bar{Y}_0\}^2  
                                                     \right.\\
                                    &&  \left.                 
                                                     +   \frac{2}{N_1 N_0}  \sumN \{ Y_i(1) - \bar{Y}_1 \}\{Y_i(0) - \bar{Y}_0\}
                                           \right].
\end{eqnarray*}
Because of the following decomposition based on $2ab = a^2 + b^2 - (a-b)^2$:
$$2 \{ Y_i(1) - \bar{Y}_1 \}\{ Y_i(0) - \bar{Y}_0 \} = \{ Y_i(1) - \bar{Y}_1 \}^2 +  \{ Y_i(0) - \bar{Y}_ 0 \}^2 - \{ Y_i(1) - Y_i(0) - \bar{Y}_1   +\bar{Y}_0  \}^2 ,$$ 
we have 
$
2S_{10} = S_1^2 + S_0^2 - S_\tau^2,
$
and therefore
$$
\var(\widehat{\tau}) = \frac{  S_1^2} {N_1 } + \frac{ S_0^2 }{  N_0 } - \frac{ S_\tau^2} { N} .
$$
Furthermore, $\sumN T_i \left\{    Y_i(1) /N_1 + Y_i(0) / N_0\right\} / N_1 $ is the mean of a simple random sample from $\left\{  x_i =    Y_i(1) /N_1 + Y_i(0) / N_0   : i=1, \ldots, N \right\}$, and the asymptotic Normality of $\widehat{\tau}$ follows from (\ref{eq::representNeyman}) and Lemma \ref{lemma::CLT} if $ x_i =    Y_i(1) /N_1 + Y_i(0) / N_0  $ satisfies the condition in (\ref{eq::hajekcondition}).
\end{proof}

\begin{proof}[Proof of Theorem 2.]
Under Fisher's sharp null, all the potential outcomes are fixed constants with $Y_i(1) = Y_i(0) = Y_i^{\obs}$.
The randomization statistic can be represented as
\begin{eqnarray}
\widehat{\tau}(\bm{T}, \bm{Y}^{\obs})  &=& \frac{1}{N_1} \sumN T_i Y_i^{\obs} - \frac{1}{N_0} \sumN (1 - T_i)Y_i^{\obs}  \nonumber \\
                         &=&  \frac{N}{N_1 N_0} \sumN T_i Y_i^{\obs} - \frac{1}{N_0} \sumN Y_i^{\obs}  . \label{eq::representFisher}
\end{eqnarray}
Using Lemma \ref{lemma::randomization}, we have 
\begin{eqnarray*}
E\left\{ \widehat{\tau}(\bm{T}, \bm{Y}^{\obs})  \mid H_0( \text{Fisher} )   \right\}  
&=& \frac{N}{N_1 N_0} \frac{N_1}{N} \sumN  Y_i^{\obs} - \frac{1}{N_0} \sumN Y_i^{\obs} =0,\\
\var\left\{ \widehat{\tau}(\bm{T}, \bm{Y}^{\obs}) \mid H_0( \text{Fisher} )   \right\} 
&=& \frac{N}{N_1 N_0 (N-1)} \sumN (Y_i^{\obs} - \bar{Y}^{\obs})^2  .
\end{eqnarray*}
Because $  \sumN T_i Y_i^{\obs}  / N_1 $ is the mean of a simple random sample from $\left\{ x_i = Y_i^{\obs}:1, \ldots, N  \right\}$, the randomization statistic $\widehat{\tau}(\bm{T}, \bm{Y}^{\obs})  $ follows a Normal distribution asymptotically by (\ref{eq::representFisher}) and Lemma \ref{lemma::CLT} if $x_i = Y_i^{\obs}$ satisfies the condition in (\ref{eq::hajekcondition}).
\end{proof}

\begin{proof}[Proof of Theorem 3.]
We have the following variance decomposition for $\bm{Y}^{\obs}$:
\begin{eqnarray*}
&&\sumN (Y_i^{\obs} - \bar{Y}^{\obs})^2 \\
&=& \sum\limits_{ \{ i:  T_i=1\}  } (Y_i^{\obs} - \bar{Y}_1^{\obs} +  \bar{Y}_1^{\obs} - \bar{Y}^{\obs})^2 + \sum\limits_{ \{ i: T_i=0\} } (Y_i^{\obs} - \bar{Y}_0^{\obs} +  \bar{Y}_0^{\obs} - \bar{Y}^{\obs})^2 \\
&=& \sum\limits_{ \{ i: T_i=1\} }   (Y_i^{\obs} - \bar{Y}_1^{\obs})^2 + N_1  (  \bar{Y}_1^{\obs} - \bar{Y}^{\obs})^2 +  \sum\limits_{ \{ i: T_i=0\} }   (Y_i^{\obs} - \bar{Y}_0^{\obs})^2 + N_0  (  \bar{Y}_0^{\obs} - \bar{Y}^{\obs})^2.
\end{eqnarray*}

Ignoring the difference between $N$ and $N-1$ contributes only a higher order term $o_p(N^{-1})$ in the asymptotic analysis. Therefore, we obtain that
\begin{eqnarray*}
&&\widehat{V}(\text{Fisher}) - \widehat{V}(\text{Neyman}) \\
&=& N_0^{-1} s_1^2 + N_1^{-1} s_0^2   +   
N_0^{-1}  (  \bar{Y}_1^{\obs} - \bar{Y}^{\obs})^2 
 + N_1^{-1}  (  \bar{Y}_0^{\obs} - \bar{Y}^{\obs})^2  
- N_1^{-1} s_1^2 - N_0^{-1} s_0^2  + o_p(N^{-1}) \\
&=& (N_0^{-1} - N_1^{-1}) (s_1^2 - s_0^2) +  N_0^{-1}  (  \bar{Y}_1^{\obs} - \bar{Y}^{\obs})^2 + N_1^{-1}(  \bar{Y}_0^{\obs} - \bar{Y}^{\obs})^2   + o_p(N^{-1}).
\end{eqnarray*}
Since $\bar{Y}^{\obs} = (N_1 \bar{Y}_1^{\obs} + N_0 \bar{Y}_0^{\obs}) / N$, we have 
$$
(\bar{Y}_1^{\obs} - \bar{Y}^{\obs} )^2 / N_0 =  N_0  ( \bar{Y}_1^{\obs} - \bar{Y}_0^{\obs})^2 / N^2, \quad 
(\bar{Y}_0^{\obs} - \bar{Y}^{\obs})^2 / N_1  = N_1 ( \bar{Y}_1^{\obs} - \bar{Y}_0^{\obs} )^2/N^2.
$$
It follows that
$$
\widehat{V}(\text{Fisher}) - \widehat{V}(\text{Neyman})  = 
(N_0^{-1} - N_1^{-1}) (s_1^2 - s_0^2) + N^{-1}  ( \bar{Y}_1^{\obs} - \bar{Y}_0^{\obs} )^2 + o_p(N^{-1}).
$$
Replacing the sample quantities $(s_1^2, s_0^2, \bar{Y}_1^{\obs} , \bar{Y}_0^{\obs})$
by the population quantities $(S_1^2, S_0^2, \bar{Y}_1, \bar{Y}_0)$ adds only higher order terms $o_p(N^{-1})$, and we eventually have
$$
\widehat{V}(\text{Fisher}) - \widehat{V}(\text{Neyman})  = 
(N_0^{-1} - N_1^{-1}) (S_1^2 - S_0^2) + N^{-1}  ( \bar{Y}_1 - \bar{Y}_0)^2 + o_p(N^{-1}).
$$
\end{proof}

\begin{proof}[Proof of Corollary 1.]
For binary outcomes, the conclusions follow from
\begin{eqnarray*}
s_t^2 &=&  \frac{1}{N_t-1} \sum_{ \{i:T_i=t\} } (Y_i^{\obs} - \bar{Y}_t^{\obs})^2 = \frac{N_t}{N_t - 1} \widehat{p}_t (1-\widehat{p}_t) , \\ 
s^2 &=& \frac{1}{N-1}  \sum_{i=1}^N (Y_i^{\obs} - \bar{Y}^{\obs})^2 = \frac{N}{N-1} \widehat{p} (1-\widehat{p}) .
\end{eqnarray*}
\end{proof}

\begin{proof}
[Proof of Theorem 4.]
Under the sharp null hypothesis, $\{  |\widehat{\tau}_i| = | Y_{i1}^{\obs} - Y_{i2}^{\obs}  | : i=1, \ldots, N\} $ are all fixed numbers, and $\widehat{\tau}(\bm{T}, \bm{Y}^{\obs})$ has the same distribution as
$$
\widehat{\tau}(\bm{T}, \bm{Y}^{\obs})  \sim  \frac{1}{N} \sumN (1-2T_i) |\widehat{\tau}_i| 
\sim \frac{1}{N} \sumN \delta_i |\widehat{\tau}_i| ,
$$
where $\delta_i$'s are iid random signs with mean zero and variance one. Therefore, the randomization distribution of $\widehat{\tau}(\bm{T}, \bm{Y}^{\obs})$ has mean zero by symmetry, and variance
$$
\widehat{V}(\text{Fisher}) = 
\var\{ \widehat{\tau}(\bm{T}, \bm{Y}^{\obs}) \mid   H_0(\text{Fisher}) \} 
= \frac{1}{N^2} \sumN  \var(\delta_i)  | \widehat{\tau}_i | ^2 
= \frac{1}{N^2} \sumN    \widehat{\tau}_i ^2 .
$$
The classical Lindberg--Feller Central Limit Theorem \citep{lehmann::1998} guarantees its asymptotic normality. 

The difference between the Neymanian and Fisherian variances is
\begin{eqnarray*}
\widehat{V}(\text{Fisher})  - \widehat{V}(\text{Neyman})&=&
  \frac{1}{N^2} \sumN    \widehat{\tau}_i ^2 - \frac{1}{N(N-1)} \sumN  ( \widehat{\tau}_i - \widehat{\tau} )^2 \\
  &=&  \frac{1}{N^2} \sumN    \widehat{\tau}_i ^2 - 
    \frac{1}{N^2}  \left(  \sumN   \widehat{\tau}_i ^2  - N \widehat{\tau}^2 \right)  + o_p(N^{-1}) \\
    &=& \frac{\tau^2}{N}  + o_p(N^{-1}),
\end{eqnarray*}
where the $o_p(N^{-1})$ appears due to the difference between $N$ and $N-1$, and $\widehat{\tau} - \tau = o_p(1).$
\end{proof}

\begin{proof}
[Proof of Corollary 2.]
For matched-pair experiments with binary outcomes, we have
$$
\widehat{\tau} = \frac{1}{N} \sumN \widehat{\tau}_i = \frac{m_{10}^{\obs} - m_{01}^{\obs} } { N },
$$
since only the pairs with discordant outcomes contribute to the $\widehat{\tau}_i$ terms. 
The Fisherian variance is
$$
\widehat{V}(\text{Fisher}) =    \frac{1}{N^2} \sumN    \widehat{\tau}_i ^2
= \frac{ m_{10}^{\obs} + m_{01}^{\obs} }{N^2} ,
$$
and the Neymanian variance is
$$
\widehat{V}(\text{Neyman}) = 
\frac{1}{N(N-1)}  \left(  \sumN   \widehat{\tau}_i ^2  - N \widehat{\tau}^2 \right) 
= \frac{1}{N(N-1)}  \left\{      m_{10}^{\obs} + m_{01}^{\obs}   - \frac{  (m_{10}^{\obs} - m_{01}^{\obs})^2 } { N } \right\} .
$$
Therefore, the Fisherian test is asymptotically equivalent to
$$
\frac{\widehat{\tau} }{ \sqrt{   \widehat{V}(\text{Fisher})  } }
= \frac{    m_{10}^{\obs} - m_{01}^{\obs}     }{  \sqrt{ m_{10}^{\obs} + m_{01}^{\obs}  }}
\convD \mathcal{N}(0,1)
$$
under $H_0$(Fisher), and the Neymanian test is asymptotically equivalent to 
$$
\frac{\widehat{\tau} }{ \sqrt{   \widehat{V}(\text{Neyman})  } }
=  \frac{    m_{10}^{\obs} - m_{01}^{\obs}     }{  \sqrt{ m_{10}^{\obs} + m_{01}^{\obs}  - (m_{10}^{\obs} - m_{01}^{\obs})^2/N  }}
\convD \mathcal{N}(0,1)
$$
under $H_0$(Neyman).
\end{proof}

\begin{proof}[Proof of Theorem 5.]
It is direct to obtain $E\{   \widehat{\tau}_1 ( \bm{W}, \bm{Y}^{\obs} )   \mid    H_{0} (\text{Fisher}) \} = 0$ by symmetry.
Under $H_{0} (\text{Fisher}) $, $ \bm{Y}^{\obs} =  \{ Y_i^{\obs}:i=1,\ldots,N \}$ is a fixed vector. Lemma \ref{lemma::factorial} implies that $ \bar{Y}^{\obs} (\bm{z})$ is the sample mean of a simple random sample of size $r$ from the population $ \bm{Y}^{\obs}  $ of size $N$. Therefore, we have
\begin{eqnarray}
\var\{   \bar{Y}^{\obs} (\bm{z}) \mid H_{0} (\text{Fisher})   \}  = \left(  \frac{1}{r} - \frac{1}{N} \right) s^2.  \label{eq::lemma-factorial1}
\end{eqnarray}  
Based on the correlation structure in Lemma \ref{lemma::factorial}, we obtain that
\begin{eqnarray}
&&\cov\{   \bar{Y}^{\obs} (\bm{z}_1) ,  \bar{Y}^{\obs} (\bm{z}_2)  \mid H_{0} (\text{Fisher})  \}  \nonumber \\
&=& 
\frac{1}{r^2} \cov\left\{  \sumN W_i(\bm{z}_1) Y_i^{\obs},  \sumN W_i(\bm{z}_2) Y_i^{\obs}   \mid H_{0} (\text{Fisher})   \right\}  \nonumber  \\
&=& \frac{1}{r^2}  \left[   \sumN \cov\{   W_i(\bm{z}_1), W_i(\bm{z}_2) \}   (Y_i - \bar{Y}^{\obs})^2  \right. \nonumber \\
&&~~~~~~~~~ \left. + \sumN \sum_{i'\neq i}  \cov\{   W_i(\bm{z}_1), W_i(\bm{z}_2) \} (Y_i - \bar{Y}^{\obs}) (Y_{i'} - \bar{Y}^{\obs}) \right]   \nonumber  \\
&=&  -  \frac{1}{N^2} \sumN   (Y_i - \bar{Y}^{\obs})^2   + \frac{1}{N^2(N-1)}   \sumN \sum_{i'\neq i} (Y_i - \bar{Y}^{\obs}) (Y_{i'} - \bar{Y}^{\obs})  \nonumber  \\
&=& -  \frac{1}{N^2} \sumN   (Y_i - \bar{Y}^{\obs})^2  -  \frac{1}{N^2(N-1)}  \sumN   (Y_i - \bar{Y}^{\obs})^2  \nonumber  \\
&=& -  \frac{1}{N} s^2. \label{eq::lemma-factorial2}
\end{eqnarray}  
Therefore, the variance of the test statistic is
\begin{eqnarray*}
&&\var\{   \widehat{\tau}_1 ( \bm{W}, \bm{Y}^{\obs} )   \mid    H_{0} (\text{Fisher}) \} \\
&=& 2^{ -2(K-1) } \bm{g}_1 '  \cov(  \bar{\bm{Y}}^{\obs} ) \bm{g}_1\\
&=& 2^{ -2(K-1) } \left[   \sum_{j=1}^J  g_{1j}^2 \var\{   \bar{Y}^{\obs} (\bm{z}_j) \mid H_{0} (\text{Fisher})    \}   \right. \nonumber \\
&&\left.~~~~~~~~~~~~~~~ +  \sum_{j=1}^J \sum_{j'\neq j}^J g_{1j} g_{1j'}   \cov\{   \bar{Y}^{\obs} (\bm{z}_j) ,  \bar{Y}^{\obs} (\bm{z}_{j'})  \mid H_{0} (\text{Fisher})  \}  \right] \\
&=& 2^{ -2(K-1) }  s^2  \left\{   \sum_{j=1}^J  g_{1j}^2 \left( \frac{1}{r} - \frac{1}{N} \right) -     \sum_{j=1}^J \sum_{j'\neq j}^J  g_{1j}g_{1j'}   \frac{1}{N}   \right\} ,
\end{eqnarray*}
where the last equation is due to (\ref{eq::lemma-factorial1}) and (\ref{eq::lemma-factorial2}).
Since 
$$
0 =\left(  \sum_{j=1}^J g_{1j}  \right)^2 = \sum_{j=1}^J g_{1j}^2 +   \sum_{j=1}^J \sum_{j'\neq j}^J  g_{1j}g_{1j'},
$$ 
we have 
$$ 
-  \sum_{j=1}^J \sum_{j'\neq j}^J  g_{1j}g_{1j'} = \sum_{j=1}^J g_{1j}^2 = J.
$$ 
Therefore, we can simplify the variance as
\begin{eqnarray*}
&&\var\{   \widehat{\tau}_1 ( \bm{W}, \bm{Y}^{\obs} )   \mid    H_{0} (\text{Fisher}) \} 
= 2^{ -2(K-1) }  s^2 J/r.
\end{eqnarray*}
\end{proof}

\begin{proof}[Proof of Theorem 6.]
We first observe the following variance decomposition:
\begin{eqnarray*}
&&\sum_{i=1}^N  (  Y_i^{\obs} - \bar{Y}^{\obs} )^2 \\
&=& \sum_{\bm{z}\in \mathcal{F}_K} \sum_{\{ i: W_i(\bm{z}) = 1 \} } 
\{    Y_i^{\obs} -   \bar{Y}^{\obs}(\bm{z})  +   \bar{Y}^{\obs}(\bm{z})  - \bar{Y}^{\obs} \}^2\\
&=&\sum_{\bm{z}\in \mathcal{F}_K} \sum_{\{ i: W_i(\bm{z}) = 1 \} } 
\{    Y_i^{\obs} -   \bar{Y}^{\obs}(\bm{z}) \}^2  + r \sum_{\bm{z}\in \mathcal{F}_K}   \{ \bar{Y}^{\obs}(\bm{z})  - \bar{Y}^{\obs} \}^2 .
\end{eqnarray*}
Therefore, we have
\begin{eqnarray*}
s^2 &=& \frac{1}{N-1} \sum_{\bm{z}\in \mathcal{F}_K} \sum_{\{ i: W_i(\bm{z}) = 1 \} } 
\{    Y_i^{\obs} -   \bar{Y}^{\obs}(\bm{z}) \}^2  +\frac{r}{N-1}   \sum_{\bm{z}\in \mathcal{F}_K}   \{ \bar{Y}^{\obs}(\bm{z})  - \bar{Y}^{\obs} \}^2\\
&=&  \frac{r-1}{N-1} \sum_{\bm{z}\in \mathcal{F}_K} s^2(\bm{z}) + \frac{r}{N-1} \sum_{\bm{z}\in \mathcal{F}_K}   \{ \bar{Y}^{\obs}(\bm{z})  - \bar{Y}^{\obs} \}^2\\
& = & \frac{1}{J} \sum_{\bm{z}\in \mathcal{F}_K} s^2(\bm{z}) + \frac{1}{J} \sum_{\bm{z}\in \mathcal{F}_K}   \{ \bar{Y}^{\obs}(\bm{z})  - \bar{Y}^{\obs} \}^2 + o_p(r^{-1}),
\end{eqnarray*}
where ignoring the difference between $N$ and $N-1$ and between $r$ and $r-1$ in the last equation contributes the higher order term.
Therefore, we have
\begin{eqnarray*}
2^{2(K-1)} r \left\{ \widehat{V}_1(\text{Fisher}) - \widehat{V}_1(\text{Neyman}) \right\} 
= Js^2 -  \sum_{\bm{z}\in \mathcal{F}_K} s^2(\bm{z}) 
 =  \sum_{\bm{z}\in \mathcal{F}_K}   \{ \bar{Y}^{\obs}(\bm{z})  - \bar{Y}^{\obs} \}^2 + o_p(r^{-1}) .
\end{eqnarray*}
Since 
$
\bar{Y}^{\obs} =  \sum_{\bm{z}\in \mathcal{F}_K}  \bar{Y}^{\obs}(\bm{z}) / 2^K$, the formula $\sum_{i=1}^{n} ( x_i  - \bar{x} )^2 = \sum_{i = 1}^n \sum_{j=1}^n (x_i - x_j)^2/(2n) $ gives us
$$
\sum_{\bm{z}\in \mathcal{F}_K}   \{ \bar{Y}^{\obs}(\bm{z})  - \bar{Y}^{\obs} \}^2
= \sum_{\bm{z}\in \mathcal{F}_K} \sum_{\bm{z} ' \in \mathcal{F}_K} \{ \bar{Y}^{\obs}(\bm{z})  - \bar{Y}^{\obs}(\bm{z}')  \}^2/2^{K+1}.
$$
Consequently, we have
$$
\widehat{V}_1(\text{Fisher}) - \widehat{V}_1(\text{Neyman}) 
= \frac{1}{2^{3K-1} r}  \sum_{\bm{z}\in \mathcal{F}_K}    \sum_{\bm{z} ' \in \mathcal{F}_K} 
\{ \bar{Y}^{\obs}(\bm{z})  -   \bar{Y}^{\obs}(\bm{z'})  \}^2 + o_p(r^{-1}) ,
$$
which leads to the final conclusion since replacing $\bar{Y}^{\obs}(\bm{z})$ by $\bar{Y}(\bm{z})$ contributes only $o_p(r^{-1})$.
\end{proof}

\begin{proof}
[Proof of Theorem 7.]
In the following, we will prove the results for completely randomized experiments, matched-pair experiments, and factorial experiments, respectively.

For completely randomized experiments with binary outcomes, we can summarize the observed data by a two by two table with cell counts $n_{ty}^{\obs} = \#\{ i: T_i=t, Y_i^{\obs} = y \}$, where $t,y=0,1$. The row sums $N_1 = n_{11}^{\obs} + n_{10}^{\obs}$ and $N_0 = n_{01}^{\obs} + n_{00}^{\obs}$ are fixed by the design of experiments, and the column sums $ n_{11}^{\obs} + n_{01}^{\obs}$ and $n_{10}^{\obs} + n_{00}^{\obs}$ are also fixed under the sharp null hypothesis. Therefore, $n_{11}^{\obs}$ is the only random component in the two by two table, because other cell counts are deterministic functions of it. According to the treatment assignment mechanism, we know that $n_{11}^{\obs}$ follows the hypergeometric distribution the same as the one in Fisher's exact test. All test statistics are functions of the two by two table, and thus functions of $n_{11}^{\obs}$. Consequently, all test statistics are equivalent to the difference-in-means statistic under the sharp null.

For matched-pair experiments with binary outcomes, we can summarize the observed data by the two by two table with cell counts $m_{y_1y_0}^{\obs}$ defined in the main text. Under the sharp null hypothesis, $m_{11}^{\obs}$, $m_{00}^{\obs}$, and $m_{dis}^{\obs} =  m_{10}^{\obs}+m_{01}^{\obs}$ are all fixed numbers, implying that the only random component in the two by two table is $m_{10}^{\obs}.$ According to the treatment assignment mechanism, we know $m_{10}^{\obs} \sim$ Binomial$(m_{dis}^{\obs} , 1/2 )$. All test statistics are functions of the two by two table, and thus functions of $m_{10}^{\obs}$. Consequently, all test statistics are equivalent to the difference-in-means statistic under the sharp null.

For $2^K$ factorial experiments, by symmetry we only need to show the result for factorial effect 1. It has the same structure as completely randomized experiments, and therefore, the conclusion follows.
\end{proof}

\section{Connections with Regression-Based Inference}
\label{sec::regression-inference}

 Assume the 
following linear model for the observed outcomes:
\begin{eqnarray}
\label{eq::LM}
Y_i^{\obs} = \alpha + \beta T_i + \varepsilon_i, 
\end{eqnarray}
where $\varepsilon_i, \ldots, \varepsilon_N$ are independently and identically distributed (iid) as $\mathcal{N}(0,\sigma^2)$.
The hypothesis of zero treatment effect is thus characterized by
$
H_0(LM) :  \beta = 0 . 
$

\cite{hinkelmann::2007} called 
$$
Y_{i}^{\obs} = T_iY_i(1) + (1 - T_i)Y_i(0) = Y_i(0) +\{ Y_i(1) - Y_i(0) \}T_i = \alpha + \beta T_i +\varepsilon_i
$$ 
the ``derived linear model'', assuming that $Y_i(1) - Y_i(0) = \beta$ is a constant and $Y_i(0) = \alpha + \varepsilon_i$ for all $i=1,\ldots, N.$ 
But the linear model for observed outcomes ignores the design of the randomized experiment, and the ``iid'' assumption contradicts $\text{cov}(T_i, T_j)\neq 0$ and $\text{cov}(Y_i^{\obs}, Y_j^{\obs}) \neq 0$ for $i\neq j$.
Although linear regression has been criticized for analyzing experimental data \citep{freedman::2008}, the least square estimator $\widehat{\beta}_{OLS}  = \widehat{\tau}$ is unbiased for the average causal effect $\tau$. 
However, the correct variance of $\widehat{\beta}_{OLS} $ requires careful discussion.

\subsection{Wald Test and Neymanian Inference}
\label{sec::huber-white}

The residual is defined as $\widehat{\varepsilon}_i = Y_i^{\obs} - \bar{Y}_1$ if $T_i = 1$ and $\widehat{\varepsilon}_i = Y_i^{\obs} - \bar{Y}_0$ if $T_i = 0$. Since the variance $\sigma^2$ in the linear model can be estimated by 
\begin{eqnarray*}
\widehat{\sigma}^2 =  \frac{1}{N-2} \sumN \widehat{\varepsilon}_i^2 
=  \frac{N_1 - 1}{N-2} s_1^2 + \frac{N_0 - 1}{N-2} s_0^2,
\end{eqnarray*}
the variance of $\widehat{\beta}_{OLS}$,
$
\Var (  \widehat{\beta}_{OLS}   ) = N \sigma^2/ (N_1 N_0 ) ,
$ 
can be estimated by 
$$ 
\widehat{V}_{OLS}=  \frac{N  (N_1 - 1)}{(N-2)N_1 N_0} s_1^2 +  \frac{N  (N_0 - 1)}{(N-2)N_1 N_0} s_0^2
 \approx  \frac{ s_1^2}{N_0}  +  \frac{  s_0^2}{N_1}.
$$
It is different from Neyman's variance estimator unless $N_1=N_0.$ Fortunately, we can avoid this problem by using Huber--White heteroskedasticity-robust variance estimator:
\begin{eqnarray*}
\widehat{V}_{HW}  =  \frac{   \sumN \widehat{\varepsilon}_i^2 (T_i - \bar{T})^2   }{  \left\{     \sumN  (T_i - \bar{T})^2  \right\}^2   } 
=  \frac{s_1^2}{N_1} \frac{N_1 - 1}{N_1} + \frac{s_0^2}{N_0} \frac{N_0-1}{N_0}
\approx  \frac{s_1^2}{N_1} + \frac{s_0^2}{N_0},
\end{eqnarray*}
which is asymptotically equivalent to the Neymanian variance estimator.
Therefore, the Wald statistic using $\widehat{V}_{HW}$ for testing $H_0(LM)$ is
asymptotically the same as the Neymanian test.

\subsection{Rao's Score Test and the FRT}

While the connection between the behavior of the Wald test for $H_0(LM)$ and Neyman's test has been established in previous studies, we make
a similar connection between Rao's score test for $H_0(LM)$ and the FRT in the following theorem.
\begin{theorem}
\label{thm::rao}
Rao's score test for $H_0(LM)$ under model (\ref{eq::LM}) is equivalent to
$$
  \frac{\widehat{\tau}}{ \sqrt{  \widehat{V}_S  } } \convD \mathcal{N}(0, 1),
$$
where $\widehat{V}_S = (N-1)s^2 /(N_1 N_0)  $.
\end{theorem} 
Ignoring the difference between $(N-1)$ and $N$ when $N$ is large, the difference between
$\widehat{V}_S$ and $\widehat{V}( \text{Fisher} )$ is of higher order, and
 Rao's score test is asymptotically equivalent to the FRT.
The sharp null hypothesis imposes the equal variance assumption on potential outcomes under treatment and control, leading to the equivalence of Rao's score test under the homoskedastic model and the FRT.

\begin{proof}[Proof of Theorem \ref{thm::rao}.]
The log likelihood function for the linear model in is
$$
l(\alpha, \beta, \sigma^2) = -\frac{N}{2}\log(2\pi\sigma^2) - \frac{\sumN (Y_i^{\obs} - \alpha - \beta T_i)^2 }{2\sigma^2}.
$$
Therefore, 
the score functions are 
\begin{eqnarray*}
\partial l/\partial \alpha  &=& \sumN (Y_i - \alpha - \beta T_i)/\sigma^2, \\
\partial l /\partial \beta   &=&  \sumN (Y_i - \alpha - \beta T_i)T_i /\sigma^2 , \\
\partial l /\partial \sigma^2 &=& -N/ (2\sigma^2)  + \sumN (Y_i - \alpha - \beta T_i)^2 /\{ 2  (\sigma^2 ) ^2\}  . 
\end{eqnarray*}
Plugging the MLEs under the null hypothesis with $\beta=0$, $\widetilde{\alpha} = \bar{Y}^{\obs}$ and $\widetilde{\sigma}^2 = \sumN (Y_i^{\obs} - \bar{Y}^{\obs})^2/N$ into the score functions, we obtain that only the second component of the score functions is non-zero:
$ \sumN (Y_i - \bar{Y}) T_i /\widetilde{\sigma}^2 = N_1 N_0 \widehat{\tau} / ( N\widetilde{\sigma}^2 )   .$

The second order derivatives of the log likelihood function are
\begin{eqnarray*}
\partial ^2 l /\partial \alpha^2  &=&  -N/\sigma^2 , \\
\partial ^2 l/\partial \beta^2     &=&  \sumN T_i^2/\sigma^2 = -N_1/\sigma^2,  \\
\partial ^2l/\partial (\sigma^2)^2 &=&  N / ( 2 \sigma^4 ) -  \sumN(Y_i - \alpha - \beta T_i)^2 / \sigma^6 , \\
\partial ^2l/\partial \alpha\partial \beta &=& -N_1/\sigma^2 , \\
\partial ^2 l/\partial \alpha\partial  \sigma^2 &=&  -  \sumN (Y_i - \alpha - \beta T_i)    / \sigma^4 , \\
\partial ^2l /\partial \beta\partial \sigma^2    &=&  -   \sumN (Y_i - \alpha - \beta T_i)T_i    / \sigma^4 .
\end{eqnarray*}
Therefore, the expected Fisher information matrix is
\begin{eqnarray*}
\bm{I}_N = 
\begin{pmatrix}
N/\sigma^2 & N_1/\sigma^2 & 0 \\
N_1/\sigma^2 & N_1/\sigma^2 & 0 \\
0& 0& N/ (  2  \sigma^4 ) 
\end{pmatrix},
\end{eqnarray*}
with the $(2,2)$-th element of $\bm{I}_N^{-1}$ being $N\sigma^2/(N_1N_0)$. Thus, Rao's score test for $H_0(LM)$ is
$$
\left( { N_1 N_0 \widehat{\tau} \over  N\widetilde{\sigma}^2 } \right) ^2 { N\widetilde{\sigma}^2 \over N_1N_0 }  \convD  \chi^2(1),
$$
or equivalently,
$$
\widehat{\tau} \Big/  \sqrt{  \frac{N\widetilde{\sigma}^2 }{N_1 N_0} }  = \widehat{\tau} \Big/  \sqrt{  \frac{(N-1)s^2 }{N_1 N_0} } 
 = \frac{\widehat{\tau}}{  \sqrt{  \widehat{V}_S }}
\convD  \mathcal{N}(0,1).
$$
\end{proof}

%
%
%
%

\section{More Details About Figure 4}
According to Corollary 1 in the main text, the Neymanian test has larger asymptotic power than the Fisherian test if and only if
$$
\left( \frac{1}{1-r} - \frac{1}{r} \right) \left\{   p_1(1-p_1) - p_0(1-p_0)  \right\}
+ (p_1 - p_0)^2 > 0.
$$
After some simple algebra, we can simplify the above inequality as
$$
(p_1 - p_0) (  a p_1 + b p_0 + c  ) > 0,
$$
where 
$$
a = \frac{ 1-r-r^2 }{ (1-r)r }, \quad
b = \frac{ 1-3r+r^2  }{ (1-r)r  },\quad 
c = \frac{  2r - 1}{  (1-r)r }.
$$
The shape of the region depends on the signs of $a$ and $b$, because the line $a p_1 + b p_0 +c =0$ intersects with the line $p_1-p_0=0$ at the point $(p_1, p_0) = (1/2,1/2).$ It is easy to show that $a>0$ if and only if 
$
0\leq r \leq \Gamma,
$
and $b>0$ if and only if
$
0\leq r \leq 1 - \Gamma,
$
where $\Gamma = (-1+\sqrt{5} ) / 2 \approx 0.618 $ is the reciprocal of the golden ratio. 
Therefore, when $r>1/2$, the region may have two shapes according the value of $r$ compared to $\Gamma$, as shown in Figure 4 of the main text. By symmetry, we can also plot the region when $r<1/2.$

\section{Other Test Statistics}

Consider a finite population of size $N=200$, and balanced completely randomized experiments.
Under the sharp null hypothesis, we generate potential outcomes $Y_i(1)=Y_i(0)$ from $\mathcal{N}(0,1)$; under the average null hypothesis, we generate $Y_i(1)$ from $\mathcal{N}(0,1)$, and generate $Y_i(0)$ as the order statistics of $Y_i(1)$. Clearly, the marginal distributions are the same but the correlation of the potential outcomes are different under different null hypothesis. 

The grey histogram in Figure \ref{fg::ks} is the randomization distribution of the Kolmogorov--Smirnov statistic under the sharp null hypothesis, and the white histogram with border is the randomization distribution under the average null hypothesis. The former is more disperse than the latter, indicating that the FRT using the Kolmogorov--Smirnov statistic tends to be conservative under the average null hypothesis.

The results for the Wilcoxon--Mann--Whitney rank sum statistic in Figure \ref{fg::wmw} are the same as above.

\bibliographystyle{Chicago}

\begin{thebibliography}{9}

\bibitem[{Agresti and Min(2004)}]{agresti::2004}
\textsc{Agresti, A. and Min, Y.}
(2004)
\newblock Effects and non-effects of paired identical observations in comparing proportions with binary matched-pairs data.
\newblock \textit{Statistics in Medicine} \textbf{23}, 65--75.







\bibitem[{Angrist and Pischke(2008)}]{angrist::2008}
\textsc{Angrist, J. D. and Pischke, J. S. }
(2008). 
\newblock \textit{Mostly Harmless Econometrics: An Empiricist's Companion.}
\newblock Princeton: Princeton University Press.






\bibitem[{Anscombe(1948)}]{anscombe::1948}
\textsc{Anscombe, F. J.}
(1948).
\newblock The validity of comparative experiments (with discussion).
\newblock \textit{Journal of the Royal Statistical Society, Series A (General)} \textbf{111}, 181--211.






\bibitem[{Aronow et al.(2014)}]{aronow::2014}
\textsc{Aronow, P. M., Green, D. P., and Lee, D. K. K.}
(2014).
\newblock Sharp bounds on the variance in randomized experiments.
\newblock \textit{The Annals of Statistics} \textbf{42}, 850--871.



 

\bibitem[{Barnard(1947)}]{barnard::1947}
\textsc{Barnard, G. A.}
(1947).
\newblock Significance tests for $2\times 2$ tables.
\newblock \textit{Biometrika} \textbf{34}, 123--138





\bibitem[{Box(1992)}]{box::1992}
\textsc{Box, G. E. P.}
(1992)
\newblock Teaching engineers experimental design with a paper helicopter.
\newblock \textit{Quality Engineering} \textbf{4}, 453--459.



 
 
 


\bibitem[{Cox(1958a)}]{cox::1958latin}
\textsc{Cox, D. R.}
(1958a)
\newblock The interpretation of the effects of non-additivity in the Latin square.
\newblock \textit{Biometrika} \textbf{45}, 69--73.





\bibitem[{Cox(1958b)}]{cox::1958}
\textsc{Cox, D. R.} (1958b).
\newblock \textit{Planning of Experiments}.
\newblock New York: Wiley.







\bibitem[{Cox(1970)}]{cox::1970}
\textsc{Cox, D. R.} (1970).
\newblock \textit{The Analysis of Binary Data, 1st Edition}.
\newblock London: Methuen \& Co Ltd.



 
 


\bibitem[{Cox(2012)}]{cox::2012}
\textsc{Cox, D. R.}
(2012)
\newblock Statistical causality: Some historical remarks. 
\newblock In {\it Causality: Statistical Perspectives and Applications} (C. Berzuini, P. Dawid and L. Bernardinelli, eds.) 1--5. New York: Wiley.



 


\bibitem[{Chung and Romano(2013)}]{chung::2013}
\textsc{Chung, E. and Romano, J. P.}
(2013)
\newblock Exact and asymptotically robust permutation tests. 
\newblock \textit{The Annals of Statistics} \textbf{41}, 484--507.


 



\bibitem[{Dasgupta et al.(2015) }]{dasgupta::2012}
\textsc{Dasgupta, T., Pillai, N., and Rubin, D. B.}
  (2015).
\newblock Causal inference from $2^K$
factorial designs using the potential
outcomes model.
\newblock \textit{Journal of the Royal Statistical Society, Series B (Statistical Methodology)} \textbf{77}, 727--753.



\bibitem[{Ding and Dasgupta(2016) }]{ding::2015}
\textsc{Ding, P. and Dasgupta, T.}
  (2016).
\newblock A potential tale of two by two tables from completely randomized experiments.
\newblock \textit{Journal of the American Statistical Association} \textbf{111}, 157--168.



\bibitem[{Ding et al.(2016) }]{ding::2015jrssb}
\textsc{Ding, P., Feller, A., and Miratrix, L. W.}
  (2016).
\newblock Randomization inference for treatment effect variation.
\newblock \textit{Journal of the Royal Statistical Society, Series B (Statistical Methodology)} \textbf{78}, 655--671.






 


\bibitem[{Eden and Yates(1933) }]{eden::1933}
\textsc{Eden, T. and Yates, F.}
  (1933).
\newblock On the validity of Fisher's $z$-test when applied to an actual example of non-normal data.
\newblock \textit{The Journal of Agricultural Science} \textbf{23}, 6--17.




\bibitem[{Edgington and Onghena(2007)}]{edgington::2007}
\textsc{Edgington, E. S. and Onghena, P. }
(2007). 
\newblock \textit{Randomization Tests, 4th Edition}. New York: Chapman \& Hall/CRC.





\bibitem[{Eberhardt and Fligner(1977) }]{eberhardt::1977}
\textsc{Eberhardt, K. R. and Fligner, M. A.}
  (1977).
\newblock A comparison of two tests for equality of two proportions.
\newblock \textit{The American Statistician} \textbf{31}, 151--155.




\bibitem[{Fienberg and Tanur(1996)}]{fienberg::1996}
\textsc{Fienberg, S. E. and Tanur, J. M.}
  (1996).
\newblock Reconsidering the fundamental contributions of Fisher and Neyman on experimentation and sampling.
\newblock \textit{International Statistical Review} \textbf{64}, 237--253.





 



\bibitem[{Fisher(1926) }]{fisher::1926}
\textsc{Fisher, R. A.}
  (1926).
\newblock The Arrangement of Field Experiments.
\newblock \textit{Journal of the Ministry of Agriculture of Great Britain} \textbf{33}, 503--513.




\bibitem[{Fisher(1935a)}]{fisher::1935a}
\textsc{Fisher, R. A.} (1935a).
\newblock \textit{{The Design of Experiments, 1st Edition}}.
\newblock Edinburgh: Oliver and Boyd.




\bibitem[{Fisher(1935b) }]{fisher::1935b}
\textsc{Fisher, R. A.}
  (1935b).
\newblock Comment on ``Statistical problems in agricultural experimentation''.
\newblock \textit{Supplement to the Journal of the Royal Statistical Society} \textbf{2}, 154--157, 173.





\bibitem[{Freedman(2008) }]{freedman::2008}
\textsc{Freedman, D. A.}
  (2008).
\newblock On regression adjustments to experimental data.
\newblock \textit{Advances in Applied Mathematics} \textbf{40}, 180--193.



 




\bibitem[{Gail et al.(1996) }]{gail::1996}
\textsc{Gail, M. H., Mark, S. D., Carroll, R. J., Green, S. B., and Pee, D.}
  (1996).
\newblock On design considerations and randomization-based inference for community intervention trials.
\newblock \textit{Statistics in Medicine} \textbf{15}, 1069--1092.





\bibitem[{Greenland(1991) }]{greenland::1991}
\textsc{Greenland, S.}
  (1991).
\newblock On the logical justification of conditional tests for two-by-two contingency tables.
\newblock \textit{The American Statistician} \textbf{45}, 248--251.








\bibitem[{Hajek(1960) }]{hajek::1960}
\textsc{Hajek, J.}
  (1960).
\newblock Limiting distributions in simple random sampling from a finite population.
\newblock \textit{Publications of Mathematical Institute of Hungarian Academy of Sciences, Series {A}} \textbf{5}, 361--374.



\bibitem[Hinkelmann and Kempthorne(2007)]{hinkelmann::2007}
\textsc{Hinkelmann, K. and Kempthorne, O.}
(2007).
\newblock \textit{Design and Analysis of Experiments, Volume 1, Introduction to Experimental Design, 2nd Edition}.
\newblock New York: John Wiley \& Sons.





\bibitem[{Hodges and Lehmann(1964)}]{hodges::1960}
\textsc{Hodges, J. L. Jr. and Lehmann, E. L.} (1964).
\newblock \textit{{Basic Concepts of Probability and Statistics}}.
\newblock  San Fransisco: Holden-Day.





\bibitem[{Hoeffding(1952) Hoeffding, W.}]{hoeffding::1952}
\textsc{Hoeffding, W.}
  (1952).
\newblock The large-sample power of tests based on permutations of observations.
\newblock \textit{The Annals of Mathematical Statistics} \textbf{23}, 169-192.




\bibitem[{Huber(1967) Huber, P. J.}]{huber::1967}
\textsc{Huber, P. J.}
  (1967).
\newblock The behavior of maximum likelihood estimates under nonstandard conditions.
\newblock \textit{Proceedings of the Fifth Berkeley Symposium on Mathematical Statistics and Probability}, 221-233. Berkeley: University of California Press. 





\bibitem[{Imbens and Rubin(2015)}]{imbens::2015}
\textsc{Imbens, G. W. and Rubin, D. B} (2015).
\newblock \textit{{Causal Inference in Statistics, Social, and Biomedical Sciences: An Introduction}}.
\newblock  New York: Cambridge University Press.




\bibitem[{Imai(2008)}]{imai::2008}
\textsc{Imai, K.} 
(2008).
\newblock Variance identification and efficiency analysis in randomized experiments under the matched-pair design.
\newblock \textit{Statistics in Medicine} \textbf{27}, 4857--4873.




\bibitem[{Janssen(1997)}]{janssen::1997}
\textsc{Janssen, A.}
(1997).
\newblock Studentized permutation tests for non-iid hypotheses and the generalized {B}ehrens--{F}isher problem.
\newblock \textit{Statistics \& Probability Letters} \textbf{36}, 9--21.

 



\bibitem[{Kempthorne(1952)}]{kempthorne::1952}
\textsc{Kempthorne, O.} (1952).
\newblock \textit{{Design and Analysis of Experiments}}.
\newblock New York: Wiley.



\bibitem[{Kempthorne(1955)}]{kempthorne::1955}
\textsc{Kempthorne, O.} (1955). 
\newblock The randomization theory of experimental inference. 
\newblock \textit{Journal of the American Statistical Association} \textbf{50}, 946--967.



 


\bibitem[{Lang(2015)}]{lang::2015}
\textsc{Lang, J. B.} (2015). 
\newblock A closer look at testing the ``no-treatment-effect'' hypothesis in a comparative experiment.
\newblock \textit{Statistical Science} \textbf{30}, 352--371.






\bibitem[{Lehmann(1998)}]{lehmann::1998}
\textsc{Lehmann, E. L.} (1998).
\newblock \textit{{Elements of Large-Sample Theory}}.
\newblock New York: Springer.






\bibitem[{Li and Ding(2016)}]{li::2016}
\textsc{Li, X. and Ding, P. }
(2016). 
\newblock Exact confidence intervals for the average causal effect on a binary outcome.
\newblock \textit{Statistics in Medicine} \textbf{35}, 957--960.




\bibitem[{Lin(2013)}]{lin::2013}
\textsc{Lin, W. }
(2013). 
\newblock Agnostic notes on regression adjustments to experimental data: {R}eexamining {F}reedman's critique. 
\newblock \textit{The Annals of Applied Statistics} \textbf{7}, 295--318.


 
 


\bibitem[{Nelsen(2007)}]{nelsen::2007}
\textsc{Nelsen, R. B.} 
(2007). 
\newblock \textit{An Introduction to Copulas, 2nd Edition}.
\newblock  New York: Springer.






\bibitem[{Neuhaus(1993)}]{neuhaus::1993}
\textsc{Neuhaus, G.}
(1993). 
\newblock Conditional rank tests for the two-sample problem under random censorship.
\newblock \textit{The Annals of Statistics} \textbf{21}, 1760--1779.

 


\bibitem[{Neyman(1923) }]{neyman::1923}
\textsc{Neyman, J.}
  (1923).
\newblock On the application of probability theory to agricultural experiments. {E}ssay on principles (with discussion). {S}ection 9 (translated). 
\newblock \textit{Statistical Science} \textbf{5}, 465--480.

 

\bibitem[{Neyman(1935) }]{neyman::1935}
\textsc{Neyman, J.}
  (1935).
\newblock Statistical problems in agricultural experimentation (with discussion).
\newblock \textit{Supplement to the Journal of the Royal Statistical Society} \textbf{2}, 107--180.



\bibitem[{Neyman(1937)}]{neyman::1937}
\textsc{Neyman, J.} 
(1937). 
\newblock Outline of a theory of statistical estimation based on the classical theory of probability. 
\newblock \textit{Philosophical Transactions of the Royal Society of London. Series A, Mathematical and Physical Sciences} \textbf{236} 333--380.



 

\bibitem[{Pauly, Brunner, and Konietschke(2015) }]{pauly::2015}
\textsc{Pauly, M., Brunner, E., and Konietschke, F.}
  (2015).
\newblock Asymptotic permutation tests in general factorial designs.
\newblock \textit{Journal of the Royal Statistical Society: Series B (Statistical Methodology)} \textbf{77}, 461--473.











\bibitem[{Pitman(1937) }]{pitman::1937}
\textsc{Pitman, E. J. G}
  (1937).
\newblock Significance tests which may be applied to samples from any populations.
\newblock \textit{Supplement to the Journal of the Royal Statistical Society} \textbf{4}, 119--130.







\bibitem[{Pitman(1938) }]{pitman::1938}
\textsc{Pitman, E. J. G}
  (1938).
\newblock Significance tests which can be applied to samples from any populations. III. The analysis of variance test.
\newblock \textit{Biometrika} \textbf{29}, 322--335.


 


\bibitem[{Rigdon and Hudgens(2015) }]{rigdon::2015}
\textsc{Rigdon, J. and Hudgens, M. G.}
  (2015).
\newblock Randomization inference for treatment effects on a binary outcome
\newblock \textit{Statistics in Medicine} \textbf{34}, 924--935.





\bibitem[{Reid(1982)}]{reid::1982}
\textsc{Reid, C.} 
(1982). 
\newblock \textit{Neyman from Life}.
\newblock  New York: Springer-Verlag.


 


\bibitem[{Robbins(1977) }]{robbins::1977}
\textsc{Robbins, H.}
  (1977).
\newblock A fundamental question of practical statistics.
\newblock \textit{The American Statistician} \textbf{31}, 97.






\bibitem[{Robins(1988) }]{robins::1988}
\textsc{Robins, J. M.}
  (1988).
\newblock Confidence intervals for causal parameters.
\newblock \textit{Statistics in Medicine} \textbf{7}, 773--785.







\bibitem[{Rosenbaum(2002)}]{rosenbaum::2002}
\textsc{Rosenbaum, P. R.} 
(2002). 
\newblock \textit{Observational Studies, 2nd Edition}.
\newblock  New York: Springer.



 


\bibitem[{Rubin(1974) Rubin, D. B.}]{rubin::1974}
\textsc{Rubin, D. B.}
  (1974).
\newblock Estimating causal effects of treatments in randomized and nonrandomized studies. 
\newblock \textit{Journal of Educational Psychology} \textbf{66}, 688--701.









\bibitem[{Rubin(1980) Rubin, D. B.}]{rubin::1980}
\textsc{Rubin, D. B.}
  (1980).
\newblock Comment on ``{R}andomization analysis of experimental data: the {F}isher randomization test''
by {D}. {B}asu.
\newblock \textit{Journal of the American Statistical Association} \textbf{75}, 591--593.

 
 
 
\bibitem[{Rubin(1990) Rubin, D. B.}]{rubin::1990}
\textsc{Rubin, D. B.}
  (1990).
\newblock Comment on ``On the application of probability theory to agricultural experiments. {E}ssay on principles (with discussion). {S}ection 9 (translated)'' by J. Neyman. 
\newblock \textit{Statistical Science} \textbf{5}, 472--480.




\bibitem[{Rubin(2004) Rubin, D. B.}]{rubin::2004}
\textsc{Rubin, D. B.}
  (2004).
\newblock Teaching statistical inference for causal effects in experiments and observational studies.
\newblock \textit{Journal of Educational and Behavioral Statistics} \textbf{29}, 343--367.







\bibitem[{Sabbaghi and Rubin(2014) Sabbaghi, A. and Rubin, D. B.}]{sabbaghi::2013}
\textsc{Sabbaghi, A. and Rubin, D. B.}
  (2014).
\newblock Comments on the Neyman--Fisher controversy and its consequences.
\newblock \textit{Statistical Science} \textbf{29}, 267--284.





 

\bibitem[{Scheffe(1959)}]{scheffe::1959}
\textsc{Scheffe, H.}
(1959). 
\newblock \textit{The Analysis of Variance}. 
\newblock New York: Wiley.








\bibitem[{Schochet(2010)}]{schochet::2010}
\textsc{Schochet, P.}
  (2010).
\newblock Is regression adjustment supported by the Neyman model for causal inference?
\newblock \textit{Journal of Statistical Planning and Inference} \textbf{140}, 246--259.


 

\bibitem[Samii and Aronow(2012)]{samii::2012}
\textsc{Samii, C. and Aronow, P. M.}
(2012).
\newblock On equivalencies between design-based and regression-based variance estimators for randomized experiments.
\newblock \textit{Statistics and Probability Letters} \textbf{82}, 365--370.


\bibitem[{Welch(1937) Welsh, B. L.}]{welch::1937}
\textsc{Welch, B. L.}
  (1937).
\newblock On the $z$-test in randomized blocks and Latin squares.
\newblock \textit{Biometrika} \textbf{29}, 21-52.





\bibitem[{White(1980) White, H.}]{white::1980}
\textsc{White, H.}
  (1980).
\newblock A heteroskedasticity-consistent covariance matrix estimator and a direct test for heteroskedasticity.
\newblock \textit{Econometrica} \textbf{48}, 817--838.




\bibitem[{Wilk and Kempthorne(1957)}]{wilk::1957}
\textsc{Wilk, M. B. and Kempthorne, O.}
  (1957).
\newblock Non-additivities in a Latin square design. 
\newblock \textit{Journal of the American Statistical Association} \textbf{52}, 218--236. 







\bibitem[{Wu and Hamada(2009)}]{wu::2009} 
\textsc{Wu, C. F. J. and Hamada, M. S.}
(2009).
\newblock \textit{Experiments: Planning, Analysis, and Optimization, 2nd Edition.}
\newblock New York: John Wiley \& Sons.



\bibitem[{Yates(1937)}]{yates::1937}
\textsc{Yates, F.}
  (1937).
\newblock The design and analysis of factorial experiments.
\newblock \textit{Imperial Bureau of Soil Sciences-Technical Communication, No. 35}, Harpenden.



\end{thebibliography}

\begin{thebibliography}{9}

\bibitem[{Cochran(1977)}]{cochran::1977}
\textsc{Cochran, W. G.} (1977).
\newblock \textit{Sampling Techniques, 3rd Edition}.
\newblock New York: John Wiley \& Sons.






\bibitem[{Dasgupta et al.(2015) }]{dasgupta::2012}
\textsc{Dasgupta, T., Pillai, N., and Rubin, D. B.}
  (2015).
\newblock Causal inference from $2^K$
factorial designs using the potential
outcomes model.
\newblock \textit{Journal of the Royal Statistical Society, Series B (Statistical Methodology)} \textbf{77}, 727--753.


\bibitem[{Ding and Dasgupta(2016) }]{ding::2015}
\textsc{Ding, P. and Dasgupta, T.}
  (2016).
\newblock A potential tale of two by two tables from completely randomized experiments.
\newblock \textit{Journal of the American Statistical Association}, DOI: 10.1080/01621459.2014.995796








\bibitem[{Freedman(2008) }]{freedman::2008}
\textsc{Freedman, D. A.}
  (2008).
\newblock On regression adjustments to experimental data.
\newblock \textit{Advances in Applied Mathematics} \textbf{40}, 180--193.








\bibitem[{Hajek(1960) }]{hajek::1960}
\textsc{Hajek, J.}
  (1960).
\newblock Limiting distributions in simple random sampling from a finite population.
\newblock \textit{Publications of Mathematical Institute of Hungarian Academy of Sciences, Series {A}} \textbf{5}, 361--374.



\bibitem[Hinkelmann and Kempthorne(2007)]{hinkelmann::2007}
\textsc{Hinkelmann, K. and Kempthorne, O.}
(2007).
\newblock \textit{Design and Analysis of Experiments, Volume 1, Introduction to Experimental Design, 2nd Edition}.
\newblock New York: John Wiley \& Sons.



\bibitem[{Lehmann(1998)}]{lehmann::1998}
\textsc{Lehmann, E. L.} (1998).
\newblock \textit{{Elements of Large-Sample Theory}}.
\newblock New York: Springer.


\end{thebibliography}

\begin{figure}[ht]
\centering
\subfigure[Kolmogorov--Smirnov Statistic]{
    \includegraphics[width = 0.46\textwidth]{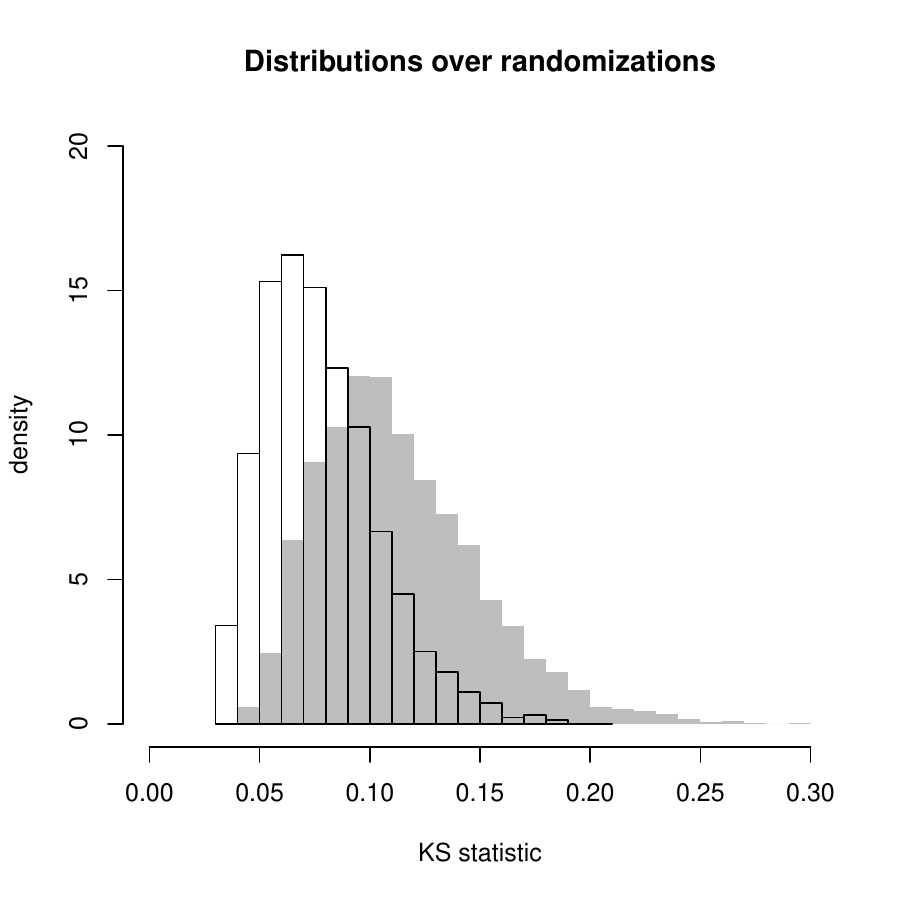}
    \label{fg::ks}
}
\subfigure[Wilcoxon--Mann--Whitney Rank Sum Statistic]{
    \includegraphics[width = 0.46\textwidth]{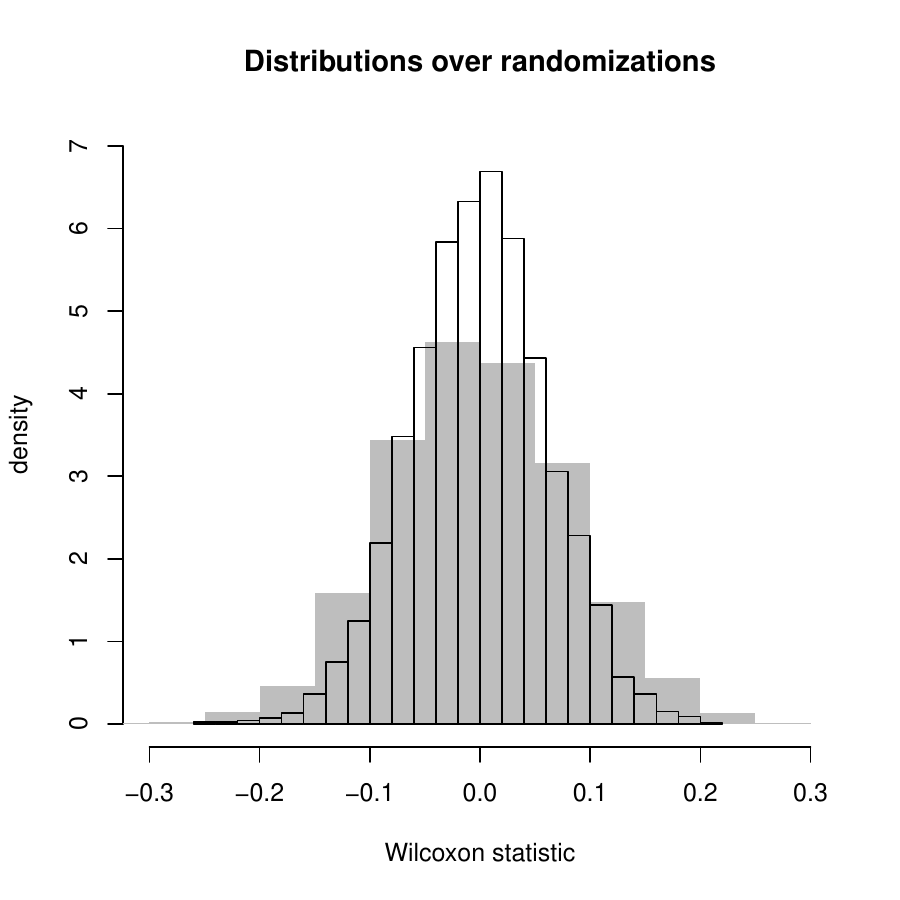}
    \label{fg::wmw}
}
\caption{Randomization Distributions of Different Test Statistics Under the Sharp Null (grey histograms) and Average Null (white histograms with borders). }
\label{fg::randomization-distribution}
\end{figure}

\end{document}